\theoremstyle{plain}
\newtheorem{thm}{Theorem}
\newtheorem{theorem}{Theorem}
\newtheorem{corollary}[thm]{Corollary}
\newtheorem{lemma}{Lemma}
\theoremstyle{definition}
\newtheorem{definition}[thm]{Definition}
\newtheorem{remark}[thm]{Remark}
\title{Quillen model structures on the category of graphs}
\author{Jean-Marie Droz}
\date{\today}
\begin{document}
\maketitle

\thanks{The first author was supported by a SNF (Swiss National Fond) grant.}

\begin{abstract}
We present different ways of endowing a particular category of graphs with Quillen model structures. We show, among other things, that the core of a graph can be seen as its homotopy type in an appropriate Quillen model structure, and that an infinity of Quillen model structures exist for our particular category of graphs.
\end{abstract}

\tableofcontents

\section{Introduction}

The construction of model structures on the category of graphs is appealing for many reasons. As in algebraic topology, a model structure can be seen as a way to pass from the category of graphs to a better but coarser homotopy category. It can also help organize concepts and results of graph theory or give a firm foundation to analogies between the category of graphs and topological or algebraic categories. It may also suggest new ideas and questions about graphs. Finally, it provides many new examples of model categories and might be a tool to develop intuition and quickly check hypothesis. 

We consider Theorem \ref{coreModelStructure} and Theorem \ref{continuumOfStructures} our two main results. This last theorem determines the cardinality of the set of model structures on the category of graphs. Theorem \ref{coreModelStructure} constructs a model structure on the category of graphs, where the core of the graph is its homotopy type. This generalizes to other categories. This generalization will be explored in further work. In the present article, we propose to begin the systematic study of the model structures on the category $\mathcal{G}$ of finite undirected graphs without multiple edges. This category is often used in combinatorics \cite{nesetril} and in work applying algebraic topology to graph theoretical problems \cite{lovasz,kozlov}.

\subsection{Previous work}

While the category $\mathcal{G}$ is not a topos (see Theorem \ref{notATopos}), the category of directed graphs $\mathcal{G}_{dir}$ with multiple edges is a functor category over the category of sets and can therefore be given the structure of a topos \cite{barrWells}. The study of model structures definable over $\mathcal{G}_{dir}$ is also of great interest. Such Quillen model structures have been constructed by Bisson and Tsemo \cite{bissonTesmoHomotopical,bissonTesmoSymbolic,bissonTesmoIsospectral}.

Investigations in the category $\mathcal{G}$ with the aim of defining a notion of homotopy of graphs have also been carried out by Dochterman and Babson \cite{dochtermann}. Model structures are currently being investigated in places far removed from topology, such as set theory \cite{hassonGavrilovich} or even constructive logic \cite{awodey}.

\subsection{Plan}

 After defining an appropriate category of graphs (Section \ref{A_category_of_graphs}) and model structures (Section \ref{Model_Categories}), we look at a few simple examples of model structures on the category of graphs (Section \ref{simple_model_structures}). We then construct a model structure whose notion of homotopy type corresponds to the notion of the core of a graph (Section \ref{coreModel}). Finally, we count the model structures on the category of graphs (Section \ref{tnomcs}).

\subsection{Acknowledgments}
This work originated from a suggestion of Paul Turner made while we were working together on a related topic and started out as a collaboration. It began at the university of Fribourg, and is now supported by an FNS grant for prospective researchers. I'd like to thank Dmitry Kozlov and Ruth Kellerhals for their support and Paul for his ideas and help. The present treatment of the core category benefited from discussions with Inna Zakharevich. I am grateful to her and Emanuele Delucchi for helping me proofread the present article.

\section{A category of graphs}
\label{A_category_of_graphs}
\subsection{Definitions}
We will work with undirected finite graphs that may have loops but have at most one edge between any two (not necessarily different) vertices. We therefore define graphs in the following way:
\begin{definition}
A {\it graph} $G$ is a symmetric binary relation on a finite set. We write $G=(V_G,E_G)$ where $V_G$ is the underlying set of {\it vertices} and $E_G$ is a set of unordered pairs of vertices called edges.  
\end{definition} 
\begin{definition}
A {\it homomorphism} $f$ between the graphs $G$ and $H$ is a map $f:V_G \to V_H$ such that $\forall x,y\in V_G,\, (x,y)\in E_G \Rightarrow (f(x),f(y))\in E_H$.
\end{definition}

Figure \ref{fig:characterisations} contains examples of graphs and homomorphisms. 

\begin{definition}
We use $\mathcal{G}_{all}$ to denote the category of graphs and graph homomorphisms. We construct the category $\mathcal{G}$ as the full subcategory of $\mathcal{G}_{all}$ with set of objects obtained by choosing one representative for each isomorphism type of graph.
\end{definition}
We observe that the categories $\mathcal{G}_{all}$ and $\mathcal{G}$ are equivalent and that all the constructions we use behave well with respect to replacing a category with an equivalent one. Our choice of working mainly with $\mathcal{G}$ is thus a matter of taste and convenience, since  $\mathcal{G}$ is small.
\subsection{Basic properties of the category of graphs}
\begin{definition}
A category is {\it finitely complete} if it possesses all finite limits. It is {\it finitely cocomplete} if it possesses all finite colimits.
\end{definition}
\begin{theorem}[Folklore]
The category of graphs $\mathcal{G}$ is finitely complete and finitely cocomplete.
\end{theorem}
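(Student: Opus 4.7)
The plan is to reduce the claim to exhibiting a terminal object, binary products, equalizers, and their duals, since finite limits and colimits of any shape can be assembled from these basic ones. Each verification then amounts to a short diagram chase, so the real content lies in the constructions themselves.

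For finite completeness, I would take the terminal object to be the one-vertex graph with a loop: any homomorphism $G \to T$ is forced on vertices, and the loop at the unique vertex ensures that every edge of $G$ has an image in $E_T$. The product $G \times H$ has vertex set $V_G \times V_H$, with $((a,b),(c,d))$ an edge exactly when $(a,c) \in E_G$ and $(b,d) \in E_H$; the universal property is routine. The equalizer of $f, g : G \to H$ is the full subgraph of $G$ induced on $\{v \in V_G : f(v) = g(v)\}$, with inherited edges.

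For finite cocompleteness, the initial object is the empty graph and the coproduct is the disjoint union on both vertex and edge sets. The coequalizer of $f, g : G \to H$ is constructed by forming the quotient set $V_K = V_H / {\sim}$, where $\sim$ is the equivalence relation on $V_H$ generated by $f(v) \sim g(v)$ for $v \in V_G$, and declaring $([u], [v]) \in E_K$ whenever there exist $u' \in [u]$ and $v' \in [v]$ with $(u', v') \in E_H$. The resulting edge set is automatically symmetric because $E_H$ is, and the ``at most one edge between two vertices'' condition holds for free, since we are defining a binary relation on a set.

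The main technical point will be verifying the universal property of the coequalizer: given a homomorphism $h : H \to K'$ with $h \circ f = h \circ g$, one must show that $h$ factors uniquely through $V_K$, which follows from the universal property of set-theoretic quotients, and that the induced map of vertices is a graph homomorphism, which follows directly from the definition of $E_K$. Finiteness is preserved throughout, since all five constructions produce vertex sets that are either finite products, subsets, or quotients of the finite vertex sets of the inputs, so the outputs indeed lie in $\mathcal{G}$.
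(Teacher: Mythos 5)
Your proposal is correct and follows essentially the same route as the paper: reduce finite (co)completeness to the existence of (co)products and (co)equalizers, then exhibit each by the evident modification of the corresponding construction in sets, with the same product, induced-subgraph equalizer, disjoint-union coproduct, and quotient coequalizer. The only cosmetic difference is that you construct the terminal and initial objects directly, whereas the paper obtains them as empty (co)products.
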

\begin{proof}
A category with finite products and equalizers is finitely complete. Dually, a category with coproducts and coequalizers is finitely cocomplete \cite{barrWells}. (The existence of initial and terminal objects is a consequence of the existence of coproducts and products over the empty set.) All the necessary constructions of limits and colimits are obtained by simple modifications of their analogues in the category of sets.
Let $H$ and $K$ be two graphs. The coproduct $H+K$ is the disjoint union of the two graphs. The product $H\times K$ is  
\[ 
(V_H\times V_K, \{((h_1,k_1),(h_2,k_2))\in (V_H\times V_K)^2\mid (h_1,h_2)\in E_H \wedge (k_1,k_2)\in E_K\}).
\]
For two morphisms $f,g:H\rightarrow K$, the equalizer is the subgraph of $H$ induced by the vertices sent to the same vertex by $f$ and $g$. The coequalizer of $f$ and $g$ is the quotient of $K$ by the equivalence relation generated by the set of pairs of vertices $\{(f(x),g(x))\mid x\in V_H\}$. 
\end{proof}

Our category of graphs is of interest for combinatorics, because it permits the expression of graph theoretical properties and problems in categorical language \cite{nesetril}. For example, a graph is $k$-colorable (see \ref{chromatic}) exactly if there exists a morphism from it to the complete graph without loops on $k$ vertices. The following theorem shows that, in a sense, any assertion about finite graphs can be translated into an assertion about our category $\mathcal{G}$.

\begin{theorem}
\label{noAutomorphism}
The category $\mathcal{G}$ has no non-trivial automorphism.
\end{theorem}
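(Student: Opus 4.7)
The plan is to show that any automorphism $F$ of $\mathcal{G}$ fixes every object. Since $\mathcal{G}$ is skeletal (one representative per isomorphism class), this forces $F$ to be naturally isomorphic to $\mathrm{id}_{\mathcal{G}}$, which is the substantive content of the theorem.

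First I would identify four small graphs by purely categorical properties. The empty graph $\emptyset$ and the one-vertex loop $L_1$ are the initial and terminal objects, so $F(\emptyset)=\emptyset$ and $F(L_1)=L_1$. The loopless vertex $K_1$ is the unique graph with exactly two subobjects (the only candidates with at most one vertex are $\emptyset$, $K_1$, $L_1$, with $1$, $2$, and $3$ subobjects respectively), so $F(K_1)=K_1$; in particular $|\mathrm{Hom}(K_1,G)|=|V_G|$ is an $F$-invariant. The edge $K_2$ is the unique non-initial object $G$ with $|\mathrm{Hom}(K_1,G)|=2$, $|\mathrm{Hom}(L_1,G)|=0$, and that is not a non-trivial coproduct --- the only other two-vertex loopless graph is $K_1+K_1$, which is such a coproduct. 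Hence $F(K_2)=K_2$.

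Next, I would present an arbitrary $G\in\mathcal{G}$ as the colimit of an incidence diagram $D_G\colon\mathcal{I}_G\to\mathcal{G}$ consisting of one copy of $K_1$ per vertex, one copy of $K_2$ per non-loop edge together with its two endpoint inclusions $K_1\rightrightarrows K_2$, and one copy of $L_1$ per loop with the unique morphism $K_1\to L_1$. Being an autoequivalence, $F$ preserves colimits and fixes the three objects $K_1,K_2,L_1$ appearing in the diagram, so the image $F\circ D_G$ can differ from $D_G$ only by the global swap of the two morphisms $i_0,i_1\colon K_1\rightrightarrows K_2$. Such a swap is precisely postcomposition with the nontrivial automorphism $\tau\in\mathrm{Aut}(K_2)$, so taking $\tau$ on every $K_2$ and the identity elsewhere furnishes a natural isomorphism of diagrams $D_G\simeq F\circ D_G$. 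Passing to colimits gives $F(G)=\mathrm{colim}(F\circ D_G)\cong\mathrm{colim}(D_G)=G$, and skeletality forces $F(G)=G$.

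The main delicate point will be verifying the endpoint-swap naturality, which reduces to the identities $\tau\circ i_0=i_1$ and $\tau\circ i_1=i_0$ and the check that they intertwine correctly with every incidence morphism of $\mathcal{I}_G$. Once every object is fixed, the residual action of $F$ on each hom-set $\mathrm{Hom}(G,H)$ must be of the form $f\mapsto\alpha_H^{-1}f\alpha_G$ for some family $\alpha_G\in\mathrm{Aut}(G)$, realising $F$ as naturally isomorphic to $\mathrm{id}_{\mathcal{G}}$ and settling the theorem.
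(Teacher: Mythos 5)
Your proposal is correct and ends up proving the same thing, but the reconstruction step is carried out by a genuinely different mechanism than the paper's. Both arguments start by pinning down the same four small graphs through categorical properties, though your characterizations differ in detail: you identify the loopless vertex $K_1$ by counting subobjects (legitimate, since automorphisms preserve monomorphisms and hence the subobject poset), whereas the paper characterizes it as ``the only graph that maps to all others except the empty graph'' --- a criterion that, as literally stated, is also satisfied by every finite disjoint union of loopless vertices, so your version is arguably the more careful one. The divergence comes next: the paper reconstructs an arbitrary $G$ directly from the sets $\mathrm{Hom}(P,G)$ and $\mathrm{Hom}(E,G)$ together with the factorization (incidence) relation between them, while you present $G$ as the colimit of its incidence diagram and invoke preservation of colimits by autoequivalences, disposing of the only ambiguity --- the possible global swap of the two endpoint inclusions $K_1\rightrightarrows K_2$ --- by an explicit natural isomorphism of diagrams built from $\tau$. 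The two routes encode the same information, but the colimit formulation makes the ``determined up to isomorphism'' bookkeeping precise and forces you to confront the endpoint-swap issue that the paper passes over in silence. One step you should still write out: your closing claim that an object-fixing automorphism acts on hom-sets by conjugation is exactly the assertion that $F\simeq\mathrm{id}$, so it cannot simply be declared. It does follow quickly --- define $\alpha_G$ to be the bijection $F$ induces on $\mathrm{Hom}(K_1,G)=V_G$, note it is a graph automorphism by the same edge analysis, and observe that naturality is just functoriality of $F$ because graph morphisms are determined by their effect on vertices --- but as written it asserts the conclusion where a short construction is needed. Since the paper's own sketch omits this final step entirely, this is not a defect relative to the paper, only a place where your draft should be completed.
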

 \begin{proof}
 The proof consists in showing that everything about graphs can be expressed in terms of morphisms in $\mathcal{G}$.
 \begin{itemize}
\item{The empty graph is characterized as the initial element of $\mathcal{G}$.}
\item{The graph $P$ with only one vertex and no edge is the only graph that maps to all others except the empty graph.}
\item{The set of vertices of an object in $\mathcal{G}$ is given by the set of morphisms to it from $P$.}
\item{The graph $T$ with one vertex and one edge (a loop) is characterized by being the terminal object.}
\item{Graphs without loops are characterized by the non-existence of morphism from $T$ to them.}
\item{The property of having two vertices is expressed by having exactly two morphisms from $P$.}
\item{The graph $E$ with two vertices and one edge between them is fixed by any automorphism (of the category). Among graphs without loop, it is the unique graph with two vertices that does not map to any other graph with two vertices and without loop. }
\item{The complete structure of an arbitrary graph $G$ can be reconstructed from the sets of maps $hom(E, G)$, $hom(P,G)$ and which maps from $hom(P,G)$ factorize through which maps $hom(E, G)$. The set $hom(E, G)$ represents the edges of $G$. The set $hom(P,G)$ represents the vertices of $G$. A map $m\in hom(P, G)$ factorizes through a map $n\in hom(E, G)$, exactly when the corresponding vertices and edge are incident.}
\end{itemize}
 \end{proof}
\begin{theorem}
\label{notATopos}
The category $\mathcal{G}$ cannot be given the structure of a topos.
\end{theorem}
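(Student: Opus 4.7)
The plan is to exhibit a property enjoyed by every topos that fails in $\mathcal{G}$. The cleanest such property is that every topos is \emph{balanced}: any morphism that is simultaneously a monomorphism and an epimorphism must be an isomorphism. This follows from the standard fact that in a topos every monomorphism is regular (it arises as the equalizer of its characteristic map with the composite classifying $\mathrm{true}$), combined with the observation that an equalizer which is also epic is automatically an isomorphism.

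First I would pin down the monomorphisms of $\mathcal{G}$. Using the one-vertex graph $P$ from Theorem~\ref{noAutomorphism}, which satisfies $\mathrm{hom}(P,G)=V_G$, one sees that $f:G\to H$ is monic exactly when $f$ is injective on vertices: any failure of injectivity is detected by a suitable pair of distinct maps $P\to G$.

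Next I would produce a concrete bimorphism that is not an isomorphism. Let $D$ be the graph with vertex set $\{v_1,v_2\}$ and no edges, let $E$ be the graph with the same vertex set but with the single edge $\{v_1,v_2\}$, and let $f:D\to E$ be the identity on vertices. Then $f$ is monic (injective on vertices) and epic (any two homomorphisms out of $E$ that agree on $v_1$ and $v_2$ coincide, since a graph homomorphism is determined by its action on vertices). But $f$ is not an isomorphism: an inverse $E\to D$ would have to send the edge $\{v_1,v_2\}$ to an edge of $D$, and $D$ is edgeless. Hence $\mathcal{G}$ is not balanced, and so cannot be a topos.

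The only non-elementary ingredient is the invocation that every topos is balanced, which is a classical consequence of the existence of a subobject classifier. The rest of the argument consists of two short verifications inside $\mathcal{G}$, so I do not expect any real obstacle.
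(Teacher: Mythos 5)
Your proof is correct, but it follows a genuinely different route from the paper. The paper invokes the power-object characterization of a topos, namely the natural bijection $Hom(B,P(A))\leftrightarrow Sub(B\times A)$, and derives a contradiction by counting: with $A=K_3$, $B$ a bare vertex and $B'$ a looped vertex, any functor $P$ forces $|Hom(B,P(A))|\geq |Hom(B',P(A))|$, yet $|Sub(B\times A)|=8<18=|Sub(B'\times A)|$. You instead use the classical fact that every topos is balanced and exhibit a bimorphism that is not an isomorphism: the identity-on-vertices map from the edgeless two-vertex graph to the two-vertex graph with an edge is monic (vertex-injective, as detected by maps from the one-point graph $P$) and epic (vertex-surjective, and homomorphisms are determined by their vertex maps), but clearly not invertible. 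Both arguments are sound and both rely on properties invariant under equivalence, so each rules out $\mathcal{G}$ being equivalent to any topos. Your version has the advantage of a smaller, purely structural counterexample requiring no enumeration of subobjects; the paper's version has the advantage that the topos-theoretic input is taken directly from the definition it cites (the power-object adjunction), whereas yours leans on the derived theorem that toposes are balanced, which you would want to either cite precisely or prove via the regularity of monomorphisms as you sketch.
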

 \begin{proof}
 Let $Sub: \mathcal{G}\rightarrow \mathcal{SET}$ be the {\it subobject functor} of the category of graphs. One of the definitions of a topos states \cite{barrWells} that there exists a natural isomorphism \allowbreak $Hom(B,P(A))\leftrightarrow Sub(B\times A)$, for some functor $P: \mathcal{G}\rightarrow \mathcal{G}$. (In the topos $\mathcal{SET}$, $P$ is the powerset functor.) Let $A$ be the complete graph (without loops) on three vertices, $B$ the graph with a mere vertex and $B'$ the graph with one vertex and one loop. For any functor $P$, we have the inequality $|Hom(B,P(A))|\geq |Hom(B',P(A))|$. We compute that $8=|Sub(B\times A)|<|Sub(B'\times A)|=18$. The theorem follows by the contradiction between the two inequalities and the existence of a natural isomorphism.  
 \end{proof}
 By contrast, it is asserted in \cite[Section 8]{vigna} that our category of graphs is a quasitopos. This article also explains that other natural categories of graphs are topoi.  

\section{Model categories}
\label{Model_Categories}
\subsection{Categorical preliminaries}
\begin{definition}
In a category $\mathcal{C}$, we say that the morphism $f:A\to B$ {\it lifts on the left} of the morphism $g:C\to D$ if for any commutative diagram of solid arrows:
\[
\xymatrix{
A \ar[r]\ar[d]_{f} & C \ar[d]^{g} \\
B \ar[r] \ar@{-->}[ur]^{h} & D 
}
\]
there is a map $h$ which makes the complete diagram commutative.

\end{definition}
We will often speak of a map {\it lifting on the right} of another map, or of a commutative square having the {\it lifting property} with the obvious definitions.

Note that $h$ is not required to be unique. As examples of this definition, we give some characterizations of a few properties of maps between graphs in terms of lifting properties. Those characterizations and others of the same kind will often be used below.

\begin{definition}
Let $E$ be the graph with two vertices and one edge between them. A morphism $f:G\rightarrow H$ is said to be edge-surjective, if all morphisms from $E$ to $H$ factorize through $f$. 
\end{definition}

\begin{figure}
\centering
\includegraphics[ width=12cm]{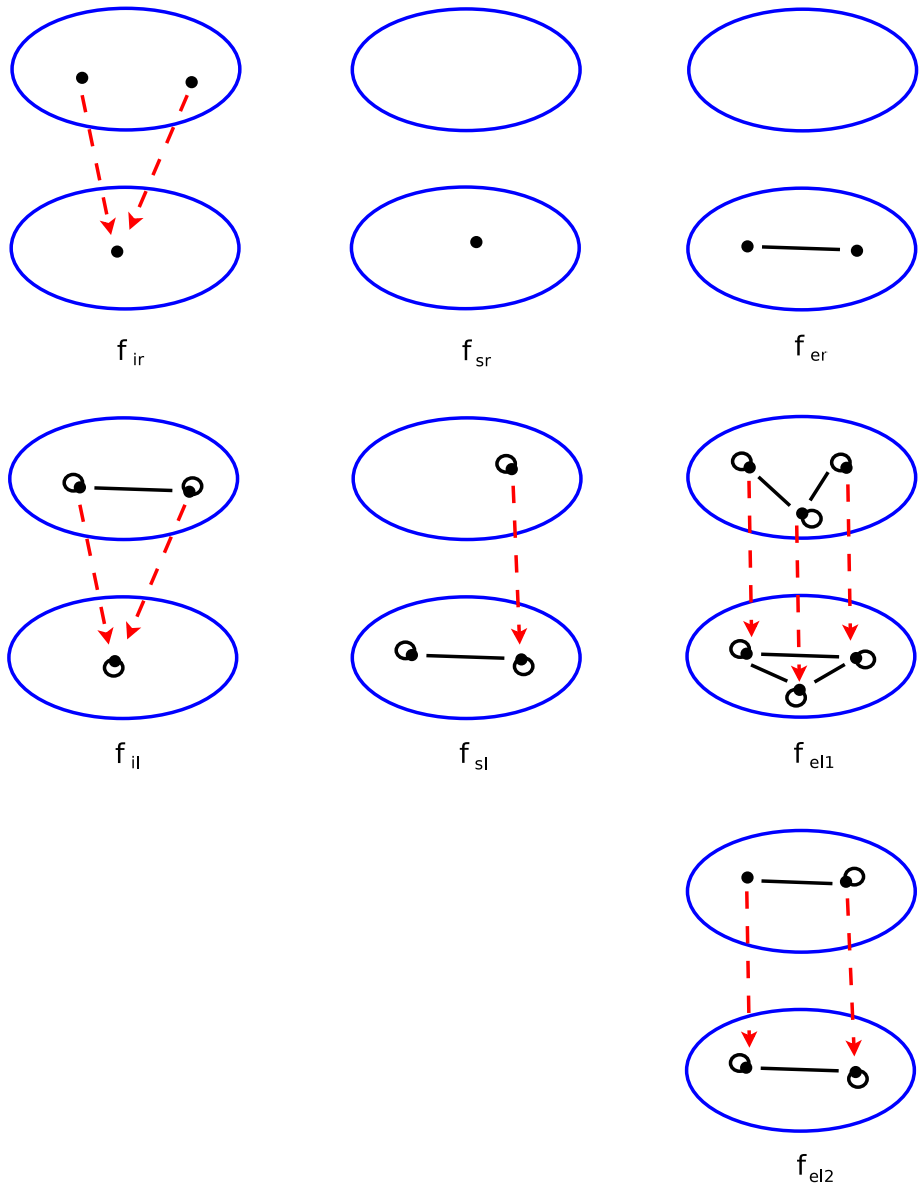}
\caption{Morphisms between graphs are depicted by two ellipses containing the domain and the codomain (the domain is in the upper ellipse, the codomain in the lower ellipse), and arrows determining where the vertices of the domains are mapped.}
\label{fig:characterisations}
\end{figure}

\begin{theorem}
\label{characterisations}
A morphism of $\mathcal{G}$ is injective exactly if it lifts on the right of $f_{ir}$, exactly if it lifts on the left of $f_{il}$. It is surjective exactly if it lifts on the right of $f_{sr}$, exactly if it lifts on the left of $f_{sl}$. It is edge-surjective exactly if it lifts on the right of $f_{er}$, exactly if it lifts on the left of $f_{el1}$ and $f_{el2}$. The graphs $f_{ir}$, $f_{il}$, $f_{sr}$, $f_{sl}$, $f_{el}$ and $f_{er}$ are defined by Figure \ref{fig:characterisations}. More precisely, concerning edge-surjectivity, a morphism has at least one vertex with a loop in the preimage of every vertex with a loop only if it lifts on the left of $f_{el2}$ and it is edge-surjective for non-loops exactly if it lifts on the left of $f_{el1}$.
\end{theorem}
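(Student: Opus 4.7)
The proof is a sequence of six direct verifications, one for each pair (property, lifting side). Each test morphism in Figure \ref{fig:characterisations} is small enough that commutative squares with it on the appropriate side encode one piece of combinatorial data about $f:G\to H$---a pair of vertices of $G$ with equal image, a vertex of $H$ without preimage, or an edge of $H$ without preimage---and the existence of a lift encodes cancellation or completion of that data. My plan is to treat each case in isolation, unfolding the square and translating the lifting condition into the corresponding combinatorial statement about $f$.

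For the three right-lifting characterizations, the pattern is the standard one. For injectivity, $f_{ir}$ should be the fold $P\sqcup P \to P$: the top map of a square picks two vertices $x,y$ of $G$, the bottom map picks their common image in $H$, and the lift exists iff $x=y$. For surjectivity, $f_{sr}$ should be $\emptyset \to P$: a square parametrizes a vertex of $H$ and the lift produces a preimage. For edge-surjectivity, $f_{er}$ is the analogous map with $E$ replacing $P$, so that squares parametrize morphisms $E\to H$ (i.e.\ edges of $H$) and lifts provide factorizations through $f$, which is exactly the definition given before the theorem.

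For the three left-lifting characterizations I would run the argument dually: the test maps $f_{il}$, $f_{sl}$, $f_{el1}$, $f_{el2}$ are chosen so that an arbitrary square with $f$ on the left forces, via the lift, precisely the same combinatorial data as before. The edge-surjective case splits because a loop in $H$ can arise as the image of a non-loop edge of $G$ when $f$ identifies the two endpoints. The test map $f_{el1}$ is built to probe edges whose endpoints are distinct in the target, so it gives an iff; the test map $f_{el2}$ is built to probe loops, and (as the statement notes) the LLP against it only captures the necessary condition that each loop-vertex of $H$ has a loop-vertex in its preimage.

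The verifications themselves are routine once the test morphisms are read off the figure, so the main obstacle is bookkeeping: being careful with orientation of edges (an edge is an unordered pair, but a morphism from $E$ can land on either ordering), distinguishing loops from non-loops in the edge-surjective statement, and tracking which vertex of each small test graph is sent where in each square. Assembling the six equivalences then yields the full statement.
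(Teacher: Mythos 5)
Your proposal is correct and follows essentially the same route as the paper, which likewise only sketches the argument: a case-by-case translation of each lifting square against a small test morphism into one piece of combinatorial data about $f$ (the paper works out only the $f_{il}$ case, whose codomain is the terminal object so that any map into its domain completes uniquely to a commutative square). The only caution is that ``run the argument dually'' is a figure of speech here---$\mathcal{G}$ is not self-dual, and the left-lifting verifications instead depend on the test maps' codomains being absorbing (complete graphs with all loops) so that arbitrary squares can be manufactured from a failure of the property---but your handling of the $f_{el1}$/$f_{el2}$ split shows you have the right picture.
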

\begin{proof}[Sketch of proof]
The simple proof of this theorem requires the examination of a long sequence of similar constructions, either for constructing the appropriate lifting map or for producing commutative squares without the lifting property. We only prove, as example of the method, that a morphism is injective if and only if it lifts on the left of $f_{il}$. For the ``if'' part, we need to build a commutative square from a non-injective morphism $f:F_1\rightarrow F_2$ to $f_{il}:G_1\rightarrow G_2$ without the lifting property. Any morphism $u:F_1\rightarrow G_1$ sending two vertices with the same image under $f$ to different vertices can be completed uniquely in such a commutative square (since $G_2$ is the terminal object). To prove the ``only if'' part, we must construct for a diagram of solid arrows with $f$ injective, an arrow $h$ which makes the diagram commute. 
\[
\xymatrix{
F_1 \ar[r]^{u}\ar[d]_{f} & G_1 \ar[d]^{f_{il}} \\
F_2 \ar[r] \ar@{-->}[ur]^{h} & G_2 
}
\]
This is accomplished by having $h$ send every point $x$ of $F_2$ either to $u\circ f^{-1} (x)$ if $x$ has a preimage or to an arbitrary point of $G_1$ otherwise.
\end{proof}

\begin{definition}
A morphism $r:A\rightarrow B$ in a category is called a {\it retraction} if it is possible to factorize the identity of $B$ as $1_B=r\circ s$ for some map $s$. We then call $B$ a retract of $A$. Dually, a morphism $s:A\rightarrow B$ is called a {\it section} if it is possible to factorize the identity of $A$ as $1_A=r\circ s$ for some map $r$. 
\end{definition}

We also often use the word ``retraction'' in a different way.

\begin{definition}
For morphisms $f$ and $g$ of $\mathcal{C}$, we say that $f$ is a {\it retraction} of $g$, if it is a retract of $g$ in the category of morphisms of $\mathcal{C}$, in other words, if there are maps of dotted arrows that make the following diagram commute.
\[
\xymatrix{
A \ar@{.>}[r]\ar[d]_{f}\ar@/^1pc/[rr]^{1} & B \ar@{.>}[r]\ar[d]^{g} & A\ar[d]_{f}\\
A' \ar@{.>}[r] \ar@/_1pc/[rr]_{1}  & B'\ar@{.>}[r] & A'
}
\]
\end{definition}

The following definitions will enable us, among other things, to give a compact reformulation of the axioms of a  model structure.

\begin{definition}
A {\it maximal lifting system} $(A,B)$ in a category $\mathcal{C}$ is a pair of sets of morphisms, such that $A$ is the set of all morphisms lifting on the left of $B$ and $B$ is the set of all morphisms lifting on the right of $A$.
\end{definition}

\begin{theorem}[Folklore]
\label{wfs}
If $(A,B)$ is a maximal lifting system, then $A$ and $B$ contain all identity morphisms and are closed under composition and under taking retraction. Moreover, $A$ is closed under coproduct and under taking the pushout along any morphism and $B$ under product and under taking the pullback along any morphism.
\end{theorem}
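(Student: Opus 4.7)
The plan is to exploit the symmetry of the definition: since $(A,B)$ is maximal, the class $A$ consists of \emph{all} morphisms lifting on the left of every element of $B$, and dually for $B$. Hence the statements for $B$ inside $\mathcal{C}$ are exactly the statements for $A$ inside $\mathcal{C}^{op}$, so I would only prove the five claims about $A$ (identity, composition, retract, coproduct, pushout) and then cite duality for the claims about $B$.

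First I would handle the easy cases. For identities: in any square with $1_X$ on the left and some $g\in B$ on the right, the top edge itself is a valid diagonal, so $1_X$ lifts on the left of every element of $B$, and by maximality $1_X\in A$. For composition: given $f_1:X\to Y$ and $f_2:Y\to Z$ in $A$ and a lifting problem for $f_2 f_1$ against $g\in B$, I would first solve the lifting problem for $f_1$ to obtain a diagonal $Y\to \mathrm{cod}(g)$, then use it to reshape the square into one with $f_2$ on the left, lift again, and check by the usual two-out-of-three style diagram chase that the resulting arrow is a diagonal for the original square. For retracts: given $f$ a retract of $h\in A$ and a lifting problem for $f$ against $g\in B$, I would precompose the square with the retraction data of $f$ into $h$, solve the lifting problem for $h$, and postcompose the resulting diagonal with the half of the retraction that lands back in the domain of $f$; commutativity in both triangles follows from the equations $1=r\circ s$ appearing in the retraction diagram.

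Next I would address the colimit closure properties. For coproducts: given $f_i:X_i\to Y_i$ in $A$ and a square with $\coprod_i f_i$ on the left against $g\in B$, precomposing with the coproduct injections produces one lifting problem for each $f_i$; solving them individually and assembling via the universal property of $\coprod Y_i$ yields the required diagonal, and uniqueness of the induced map out of a coproduct guarantees that the two triangles commute. For pushouts: given $f:A\to B$ in $A$, a morphism $u:A\to C$ and the pushout $f':C\to B\cup_A C$, a lifting problem for $f'$ against $g\in B$ restricts via the pushout leg $B\to B\cup_A C$ to a lifting problem for $f$; a diagonal for that problem combines with the composite of the top arrow of the original square with $C\to B\cup_A C$ (suitably) by the universal property of the pushout to produce a diagonal for $f'$, after one checks that the two maps out of $B$ and $C$ agree on $A$.

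I do not expect any deep obstacle; the whole argument is a catalogue of standard diagram chases whose main subtlety is verifying, in each of the three nontrivial cases (composition, retract, pushout), that the candidate diagonal actually makes \emph{both} triangles commute rather than just one. The step I would be most careful about is the pushout case, where the commutativity of the upper triangle in the lifting square requires checking the relevant cocone agrees on $A$ before invoking the universal property; the coproduct case is the same phenomenon in an easier guise. Once the claims for $A$ are established, replacing $\mathcal{C}$ by $\mathcal{C}^{op}$ transports them to the claims for $B$ and concludes the proof.
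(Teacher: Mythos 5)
Your proof is correct: the duality reduction is legitimate, since the right class $B$ of a maximal lifting system in $\mathcal{C}$ is exactly the left class of the corresponding system in $\mathcal{C}^{op}$, and each of the five claims about $A$ (identities, composition, retracts, coproducts, pushouts) is settled by the standard diagram chase you describe, with the key verifications --- that the assembled diagonal makes \emph{both} triangles commute, via uniqueness of maps out of a coproduct or a pushout --- correctly identified. The paper states this result as folklore and gives no proof at all, so there is nothing to compare against; your argument is the standard one found in the references the paper leans on (e.g.\ Dwyer--Spalinski, Section 3).
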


\begin{definition}
\label{wfsDef}
A {\it weak factorization system} $(A,B)$ in the category $\mathcal{C}$ is a maximal lifting system such that any morphism in $\mathcal{C}$, is factorisable as $g \circ f$ with $f\in A$ and $g\in B$.
\end{definition} 

\subsection{Definition of model structures}

Since we work in the small category $\mathcal{G}$, we give a definition of model structures appropriate for small categories and, in the rest of the article, avoid the careful distinction that must often be made between sets and classes. It is natural to ask for all small limits and colimits to exist in a category which might be a class, and analogously, in a category which is a set, to ask only for finite limits and colimits. (Moreover, if a small category had all small limits or colimits, it would be quite peculiar, having at most one morphism between any two objects \cite[p.110]{maclane}.)
\begin{definition}
\label{originalAxioms}
A {\it model structure} $\mathbb{M}$ on a finitely complete and finitely cocomplete category $\mathcal{C}$ is the specification of three subcategories of $\mathcal{C}$ on the same object set called the category of {\it weak equivalences} ($\mathcal{M}_{we}$), the category of {\it cofibrations} ($\mathcal{M}_{cof}$) and the category of {\it fibrations} ($\mathcal{M}_{fib}$). Those three subcategories should respect the axioms given below.
\begin{itemize}
\item{\textbf{ Two-of-three axiom } For composable morphisms $f$ and $g$, if two of the maps $f$, $g$ and $f\circ g$ are weak equivalences, then so is the third.}
\item{\textbf{ Lifting axiom} For a cofibration $f$ and a fibration $g$, if one of the two is a weak equivalence, $f$ lifts on the right of $g$.}
\item{\textbf{ Factorization axiom} Any morphism $m\in \mathcal{M}$ can be written $m=f\circ g$ for a cofibration $g$ and a fibration $f$. Moreover, either $f$ or $g$ can be chosen to be a weak equivalence.}
\item{\textbf{ Retraction axiom} The subcategories $\mathcal{M}_{weak}$, $\mathcal{M}_{cof}$ and $\mathcal{M}_{fib}$ are closed under taking retractions.}
\end{itemize}
We call {\it model category}, a category with a model structure on it.
\end{definition}
The modern definitions of a model category usually require the category to have all small limits and colimits and not just finite limits and colimits. However, Quillen's original definition of a ``closed model category'' only asks for finite limits and colimits and so all classical properties of model categories are true in our ``finite'' setting. We also checked that the other references we give for our theorems about model categories \cite{dwyerSpalinsky,joyal} only use finite limits and finite colimits in the relevant proofs.

The axioms are invariant by replacing the category $\mathcal{C}$ by its dual $\mathcal{C}^{op}$ and exchanging the role of fibrations and cofibrations. This is often useful in proofs of general facts about model structures like Theorem \ref{invar} or \ref{riehl}. 

Verification of the classical axioms above is often difficult, a more practical and equivalent set of axioms is given below.

We state a few basic properties of the subcategories given with a model structure. They are direct consequences of Theorem \ref{wfs}.

\begin{theorem}[Folklore {\cite[Section 3]{dwyerSpalinsky}}]
\label{invar}
The pushout along a morphism of a cofibration is a cofibration. The pushout along a morphism of an acyclic cofibration is an acyclic cofibration. By duality, the pullback along any morphism of a fibration or an acyclic fibration is a fibration or an acyclic fibration. 
\end{theorem}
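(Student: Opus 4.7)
The plan is to reduce everything to Theorem \ref{wfs} by showing that the pair (cofibrations, acyclic fibrations) and the pair (acyclic cofibrations, fibrations) are each maximal lifting systems in the sense of the definition preceding Theorem \ref{wfs}. Once this is established, closure of cofibrations and acyclic cofibrations under pushout along any morphism is a direct application of the statement that the left class $A$ of a maximal lifting system is closed under pushout. The pullback statements for fibrations and acyclic fibrations then follow by exactly the dual argument, applied to the right class $B$, or equivalently by invoking the $\mathcal{C}^{op}$ symmetry of the model-category axioms noted in the paragraph after Definition \ref{originalAxioms}.

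The first step is therefore to verify the characterization: a morphism $f$ is a cofibration if and only if $f$ lifts on the left of every acyclic fibration, and dually an acyclic fibration is exactly a morphism that lifts on the right of every cofibration. The easy direction in each case is the lifting axiom of Definition \ref{originalAxioms}. For the reverse direction, suppose $f$ has the left lifting property against every acyclic fibration. By the factorization axiom, write $f = p \circ i$ where $i$ is a cofibration and $p$ is an acyclic fibration. The assumed lifting property applied to the square
\[
\xymatrix{
A \ar[r]^{i} \ar[d]_{f} & X \ar[d]^{p} \\
B \ar[r]_{1_{B}} \ar@{-->}[ur]^{h} & B
}
\]
produces $h$ exhibiting $f$ as a retract of $i$ in the arrow category, so $f$ is a cofibration by the retraction axiom. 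The dual argument, using a factorization as an acyclic cofibration followed by a fibration, handles the acyclic fibration case. The analogous argument applied to the other weak factorization system, using the other half of the factorization axiom, shows that acyclic cofibrations and fibrations also form a maximal lifting system.

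Once the two maximal lifting systems are in place, the conclusion is immediate. Theorem \ref{wfs} states that the left class $A$ of a maximal lifting system is closed under taking pushouts along arbitrary morphisms, which yields the first two assertions of the present theorem. The same theorem states that the right class $B$ is closed under pullback along arbitrary morphisms, which yields the statements about fibrations and acyclic fibrations; this is also exactly what one obtains by dualizing the first two assertions, as is made possible by the self-duality of the model-category axioms.

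The only genuinely delicate point in the entire argument is the retract characterization of cofibrations and acyclic fibrations carried out in the second paragraph: everything else is bookkeeping and a direct appeal to Theorem \ref{wfs}. I expect no serious obstacle beyond keeping the two weak factorization systems clearly distinguished and ensuring that the factorization axiom is applied with the correct half (cofibration/acyclic fibration versus acyclic cofibration/fibration) in each case.
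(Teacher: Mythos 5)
Your overall route is the same as the paper's: Theorem \ref{invar} is stated there as a direct consequence of Theorem \ref{wfs}, and the only content to supply is the verification that $(\mathcal{M}_{cof},\mathcal{M}_{fib}\cap\mathcal{M}_{we})$ and $(\mathcal{M}_{cof}\cap\mathcal{M}_{we},\mathcal{M}_{fib})$ are maximal lifting systems, which you do by the standard retract argument. Your treatment of the cofibration case is correct and is exactly the intended proof.

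One sentence is off, though easily repaired. To show that a map $g$ with the right lifting property against all cofibrations is an acyclic fibration, you must again factor $g=p\circ i$ with $i$ a cofibration and $p$ an acyclic fibration: lifting $i$ against $g$ exhibits $g$ as a retract of $p$, and the retraction axiom (for fibrations and for weak equivalences, the latter also guaranteed by Theorem \ref{Tierney}) then gives that $g$ is both a fibration and a weak equivalence. Your proposal instead invokes the factorization into an acyclic cofibration followed by a fibration for this case; running the retract argument with that factorization only exhibits $g$ as a retract of a fibration, which shows $g$ is a fibration but gives no control on whether it is a weak equivalence. The (acyclic cofibration, fibration) factorization is the one needed for the genuinely dual statement, namely that a map with the right lifting property against all acyclic cofibrations is a fibration. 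With that reassignment your argument is complete and matches the paper.
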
 

We now provide a few results that will help streamline the proofs that triples of sets of maps called weak equivalences, fibrations and cofibrations form model structures. We often use the designations {\it acyclic cofibrations} or {\it acyclic fibrations} to speak of the maps that are weak equivalences and cofibrations or weak equivalences and fibrations.

Using the first part of Theorem \ref{wfs}, we obtain:

\begin{corollary}[Folklore {\cite[Section 3]{dwyerSpalinsky}}]
\label{riehl}
If the sets of fibrations, cofibrations, acyclic fibration and acyclic cofibrations are each maximal among sets satisfying the appropriate lifting properties, then they all contain the identity morphism and are closed by composition. Moreover, the axiom of retraction for each kind of map follows. 
\end{corollary}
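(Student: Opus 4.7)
The proof plan is essentially to observe that the maximality hypothesis gives us exactly two maximal lifting systems on $\mathcal{G}$, after which Theorem \ref{wfs} does all the work.

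First, I would unpack what ``appropriate lifting properties'' means for each of the four classes. By the lifting axiom of a model structure, cofibrations lift on the left of acyclic fibrations and acyclic cofibrations lift on the left of fibrations. Combined with the maximality assumption in the corollary, this gives that the set of cofibrations is exactly the set of morphisms lifting on the left of acyclic fibrations, and symmetrically the set of acyclic fibrations is exactly the set of morphisms lifting on the right of cofibrations. Hence the pair (cofibrations, acyclic fibrations) is a maximal lifting system in the sense of the definition preceding Theorem \ref{wfs}. An entirely analogous argument shows that (acyclic cofibrations, fibrations) is also a maximal lifting system.

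Once these two maximal lifting systems are identified, I would simply invoke Theorem \ref{wfs} twice. The first application yields that cofibrations and acyclic fibrations both contain all identities, are closed under composition, and are closed under taking retractions (so the retraction axiom holds for cofibrations and for acyclic fibrations). The second application yields the same three conclusions for acyclic cofibrations and fibrations. Together, these cover all four classes mentioned in the corollary, proving the retraction axiom for each.

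There is no real obstacle to speak of: the whole content is recognising that the maximality hypothesis is designed precisely so that the pairs (cofibrations, acyclic fibrations) and (acyclic cofibrations, fibrations) fit the definition of a maximal lifting system. The only point that deserves a line of verification is that the lifting axiom of Definition \ref{originalAxioms} really does give both inclusions in the characterisation of each class; once that is stated, the corollary is an immediate consequence of the already established Theorem \ref{wfs}.
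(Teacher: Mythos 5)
Your proposal is correct and follows essentially the same route as the paper, which derives the corollary directly from the first part of Theorem \ref{wfs} applied to the two maximal lifting systems (cofibrations, acyclic fibrations) and (acyclic cofibrations, fibrations). Your additional remark explaining why the maximality hypothesis makes these pairs into maximal lifting systems is a reasonable elaboration of a step the paper leaves implicit.
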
 

We need a useful but little known theorem.

\begin{theorem}[M.~Tierney \cite{joyal}]
\label{Tierney}
The axiom of retraction for weak equivalences follows from the others.
\end{theorem}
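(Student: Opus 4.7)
The plan is to reduce closure of weak equivalences under retracts to closure of acyclic fibrations under retracts, which we get for free from Theorem \ref{wfs} applied to the weak factorization system formed by cofibrations and acyclic fibrations.

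Suppose $f \colon A \to B$ is a retract of a weak equivalence $g \colon X \to Y$, witnessed by maps $u \colon A \to X$, $u' \colon X \to A$, $v \colon B \to Y$, $v' \colon Y \to B$ with $u'u = 1_A$, $v'v = 1_B$, $gu = vf$, and $fu' = v'g$. First I would factor $f = p \circ i$ with $i \colon A \to C$ an acyclic cofibration and $p \colon C \to B$ a fibration (factorization axiom); by two-of-three it suffices to prove that $p$ is a weak equivalence. Next form the pushout $C' = X \sqcup_A C$, with canonical legs $i'' \colon X \to C'$ and $u'' \colon C \to C'$. By Theorem \ref{invar}, $i''$ is an acyclic cofibration. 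The universal property yields $g' \colon C' \to Y$ with $g' i'' = g$ and $g' u'' = vp$; two-of-three applied to $g = g' i''$ shows that $g'$ is a weak equivalence. Factor $g' = q' \circ j'$ with $j'$ an acyclic cofibration and $q'$ a fibration; a second application of two-of-three to $g' = q' j'$ shows that $q'$ is an acyclic fibration.

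I would then exhibit $p$ as a retract of $q'$. In the ``forward'' direction, the pair $(j'u'' \colon C \to D',\ v \colon B \to Y)$ intertwines $p$ with $q'$, because $q' (j'u'') = g' u'' = vp$. In the ``backward'' direction, the pushout universal property gives a map $\rho \colon C' \to C$ assembled from the compatible pair $(iu' \colon X \to C,\ 1_C \colon C \to C)$; compatibility holds because $iu'u = i$. Applying the lifting axiom to the acyclic cofibration $j'$ against the fibration $p$, in the square with top edge $\rho$ and bottom edge $v'q'$, produces $\sigma \colon D' \to C$ with $\sigma j' = \rho$ and $p\sigma = v'q'$. One then checks directly that $\sigma (j'u'') = \rho u'' = 1_C$ and $v'v = 1_B$, so that $(j'u'', \sigma, v, v')$ is the required retract data.

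The main obstacle, in my view, is engineering the splitting so that $\sigma \circ (j'u'') = 1_C$ holds on the nose, not merely up to weak equivalence; a naive attempt to lift inside the original retract square fails because the map one would need to split does not split a priori. The role of the pushout $C'$ is precisely to bundle the identity $iu'u = i$ with a lifting problem whose solution delivers a splitting that is an honest identity on $C$. Once $p$ has been exhibited as a retract of the acyclic fibration $q'$, Theorem \ref{wfs} applied to the weak factorization system (cofibrations, acyclic fibrations) forces $p$ itself to be an acyclic fibration, hence a weak equivalence, and two-of-three then shows that $f = pi$ is a weak equivalence.
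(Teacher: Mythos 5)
The paper does not actually prove this statement; it is quoted from Joyal's CatLab and attributed to Tierney, so there is no internal proof to compare against. Your argument is correct and is, in substance, exactly Tierney's proof as recorded in the cited source: reduce to the fibration $p$ in a factorization $f=pi$ via two-of-three, transport the retract data along the pushout $C'=X\sqcup_A C$ to exhibit $p$ as a retract of a weak equivalence $g'$, then use a lift against the acyclic-cofibration part $j'$ of a factorization of $g'$ to upgrade this to a retract of the acyclic fibration $q'$, and finish with retract-closure of the right class of the weak factorization system $(\mathcal{M}_{cof},\mathcal{M}_{fib}\cap\mathcal{M}_{we})$ from Theorem \ref{wfs}. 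The only step you should make explicit is the commutativity of the lifting square, i.e.\ $p\rho=v'g'=v'q'j'$: this is checked on the two pushout legs, namely $p\rho u''=p=v'vp=v'g'u''$ and $p\rho i''=piu'=fu'=v'g=v'g'i''$, after which the lift $\sigma$ exists and the rest goes through as you wrote. Note also that your proof correctly relies only on the axioms of Definition \ref{shorterAxioms} (the two weak factorization systems and two-of-three), which is the right hypothesis set here, since retract-closure of the acyclic fibrations must come from Theorem \ref{wfs} rather than from the retraction axiom being established.
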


We deduce the following alternative definition of a model structure.
\begin{definition}
\label{shorterAxioms}
A {\it model structure} $\mathbb{M}$ on a finitely complete and finitely cocomplete category $\mathcal{C}$ is the specification of three classes of morphisms in $\mathcal{C}$ called the {\it weak equivalences} ($\mathcal{M}_{we}$), the {\it cofibrations} ($\mathcal{M}_{cof}$) and the {\it fibrations} ($\mathcal{M}_{fib}$). Those three sets should respect the following axioms.
\begin{itemize}
\item{\textbf{ Weak factorization system axiom } The following pairs are weak factorization systems (see Definition \ref{wfsDef}): $(\mathcal{M}_{cof},\mathcal{M}_{fib}\cap \mathcal{M}_{we})$, $(\mathcal{M}_{cof}\cap \mathcal{M}_{we},\mathcal{M}_{fib})$}
\item{\textbf{ Two-of-three axiom } For morphisms $f$ and $g$, if two of the maps $f$, $g$ and $f\circ g$ are weak equivalences, then so is the third.}
\end{itemize}
\end{definition}
\subsection{Examples}

In any finitely complete and finitely cocomplete category, taking as weak equivalence all the isomorphisms, and as cofibrations and fibrations all morphisms, we obtain a model structure on that category. We call this model structure the trivial model structure. In the category of sets, two simple model categories arise by taking as weak equivalences the isomorphisms and either taking as cofibrations the injective morphisms and as fibrations the surjective morphisms or (somewhat surprisingly) the reverse.

Classic examples of model structures arise in algebraic topology over categories of topological spaces, taking as weak equivalences either homotopies or maps inducing isomorphisms on the homotopy groups, see for example \cite{mayPonto} for details. Model categories were discovered by Quillen in this context \cite{quillen}. 

\subsection{The homotopy category of a model category}

Model structures on a category $\mathcal{C}$ are mainly used to understand the relationship between the category $\mathcal{C}$ and one of its localizations called the homotopy category of $\mathcal{C}$. If $\mathcal{C}$ is equipped with a model structure $\mathbb{M}=(\mathcal{M}_{cof},\mathcal{M}_{fib}\cap \mathcal{M}_{we})$, its homotopy category (with respect to the model structure $\mathbb{M}$) is written ${\mathcal{M}_{we}}^{-1}\mathcal{C}$. This notation comes from the following theorem.
\begin{theorem}[Quillen \cite{quillen}]
\label{homotopyAsLocalization}
The homotopy category of a model category is its localization at the weak equivalences.
\end{theorem}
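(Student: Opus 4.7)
The plan is to prove the theorem via the standard route of constructing the homotopy category concretely and then verifying it enjoys the universal property of the localization. First I would use the factorization axiom twice to build a \emph{cofibrant replacement} $QX$ for every object $X$ (by factoring the unique map from the initial object as an acyclic fibration following a cofibration) and, dually, a \emph{fibrant replacement} $RX$ (by factoring the map to the terminal object). This yields, for every $X$, a zig-zag of weak equivalences $X \leftarrow QX \to RQX$, showing every object is connected to a bifibrant (both cofibrant and fibrant) object via weak equivalences.

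Next I would introduce the notion of homotopy via cylinder and path objects, both of which are produced by the weak factorization systems of Definition \ref{shorterAxioms}: a cylinder on $A$ arises from factoring the fold map $A\sqcup A \to A$ as a cofibration followed by a weak equivalence, and dually for path objects. The key technical work is to show that on the full subcategory $\mathcal{C}_{cf}$ of bifibrant objects the left and right homotopy relations coincide, define an equivalence relation, and are compatible with composition. This uses the lifting axiom in an essential way and relies on Theorem \ref{invar} to transport cylinders and path objects along cofibrations and fibrations. Once this is established, one proves the Whitehead-type theorem that every weak equivalence between bifibrant objects is a homotopy equivalence, again by a lifting argument against the endpoint inclusion of a cylinder.

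With these tools I would define the concrete candidate $\pi\mathcal{C}_{cf}$ to be the category whose objects are bifibrant objects and whose morphisms are homotopy classes, and then extend to all of $\mathcal{C}$ by sending $X$ to $RQX$ on objects and using any lift on representatives on morphisms; the independence of choices up to homotopy is the content of the previous step. Call this category $\mathrm{Ho}(\mathcal{C})$, equipped with the evident functor $\gamma:\mathcal{C}\to \mathrm{Ho}(\mathcal{C})$. By Whitehead's theorem $\gamma$ inverts all weak equivalences.

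Finally, to identify $\mathrm{Ho}(\mathcal{C})$ with the localization $\mathcal{M}_{we}^{-1}\mathcal{C}$, I would verify the universal property directly: given any functor $F:\mathcal{C}\to \mathcal{D}$ sending weak equivalences to isomorphisms, one constructs a factorization $\bar F:\mathrm{Ho}(\mathcal{C})\to \mathcal{D}$ by setting $\bar F([f]) = F(\tilde f)$ for a chosen representative, and shows independence of the representative by using that a homotopy $H:Cyl(A)\to B$ is sent by $F$ to an equality of maps (because the two endpoint inclusions are weak equivalences, hence inverted by $F$). Uniqueness of $\bar F$ follows because every morphism of $\mathrm{Ho}(\mathcal{C})$ is generated by images of morphisms of $\mathcal{C}$ together with formal inverses of weak equivalences. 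I expect the main obstacle to be the bookkeeping establishing that homotopy is a congruence on bifibrant objects and that the cofibrant/fibrant replacements assemble into a functor up to homotopy; everything else is then a formal consequence. Since our axioms (Definition \ref{originalAxioms}) agree with Quillen's original closed model category axioms and all the above constructions only require finite limits and colimits, the argument applies unchanged to $\mathcal{C}=\mathcal{G}$.
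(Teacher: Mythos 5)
The paper gives no proof of this statement, citing it as a classical theorem of Quillen; your outline is precisely the standard argument from the cited sources (Quillen, Dwyer--Spalinski): cofibrant/fibrant replacement, homotopy as a congruence on bifibrant objects via cylinder and path objects, the Whitehead theorem, and verification of the universal property of the localization. The plan is correct and, as you note, uses only finite limits and colimits, so it applies in the paper's setting.
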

There is a canonical functor from $\mathcal{C}$ to ${\mathcal{M}_{we}}^{-1}\mathcal{C}$ which associates to every object of $\mathcal{C}$ an object of ${\mathcal{M}_{we}}^{-1}\mathcal{C}$ called its {\it homotopy type}.
Although the homotopy category only depends on the weak equivalences, the rest of the model structure is crucial for controlling the relationship between the category and its localization and for many constructions inside the homotopy category.
The classic example of a homotopy category is that of the category of topological spaces, where weak equivalences are continuous maps inducing isomorphisms on homotopy groups. In this case, Whitehead's theorem claims that the homotopy types can be represented by $CW$-complexes. See again \cite{mayPonto} or \cite{dwyerSpalinsky}.

\section{Model structures on the category of graphs}
\label{mcotcg}

\subsection{Simple model structures on the category of graphs}
\label{simple_model_structures}
We begin by describing a few trivial model structures on the category of graphs. 
\begin{theorem}
There are three trivial model structures on the category of graphs, obtained by choosing the subcategories of fibration, cofibrations and weak equivalences to be either the whole category $\mathcal{G}$ or $\mathcal{G}_{iso}$, its restriction to isomorphisms. These model structures are:
\begin{itemize}
\item $\mathcal{M}_{we}=\mathcal{G}$, $\mathcal{M}_{cof}=\mathcal{G}$ and $\mathcal{M}_{fib}=\mathcal{G}_{iso}$
\item $\mathcal{M}_{we}=\mathcal{G}$, $\mathcal{M}_{cof}=\mathcal{G}_{iso}$ and $\mathcal{M}_{fib}=\mathcal{G}$
\item $\mathcal{M}_{we}=\mathcal{G}_{iso}$, $\mathcal{M}_{cof}=\mathcal{G}$ and $\mathcal{M}_{fib}=\mathcal{G}$
\end{itemize} 
\end{theorem}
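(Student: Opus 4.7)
The plan is to apply the condensed axioms of Definition \ref{shorterAxioms}, so that for each proposed structure I only need to exhibit two weak factorization systems and verify the two-of-three property. Everything rests on a single observation: both $(\mathcal{G},\mathcal{G}_{iso})$ and $(\mathcal{G}_{iso},\mathcal{G})$ are weak factorization systems on $\mathcal{G}$.

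First I would establish these two weak factorization systems. For maximality in $(\mathcal{G},\mathcal{G}_{iso})$: any isomorphism $g:C\to D$ lifts on the right of an arbitrary morphism by taking the diagonal to be $g^{-1}$ composed with the bottom edge of the square, and dually every morphism lifts on the left of any isomorphism. Conversely, if $g$ lifts on the right of every morphism then, taking the square with $g$ both on the left and on the right and the identity on top and bottom, one reads off a two-sided inverse, so $g$ is an isomorphism; the dual argument handles the other side. Factorization is trivial, $m=\mathrm{id}\circ m=m\circ\mathrm{id}$, which simultaneously serves both WFSes.

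With these two WFSes available, each of the three structures is immediate. In the first, the two required pairs $(\mathcal{M}_{cof},\mathcal{M}_{fib}\cap \mathcal{M}_{we})$ and $(\mathcal{M}_{cof}\cap\mathcal{M}_{we},\mathcal{M}_{fib})$ both equal $(\mathcal{G},\mathcal{G}_{iso})$; in the second they both equal $(\mathcal{G}_{iso},\mathcal{G})$; in the third they are $(\mathcal{G},\mathcal{G}_{iso})$ and $(\mathcal{G}_{iso},\mathcal{G})$ respectively. Two-of-three is vacuous when $\mathcal{M}_{we}=\mathcal{G}$, and reduces to the standard fact that isomorphisms are closed under composition and cancel appropriately when $\mathcal{M}_{we}=\mathcal{G}_{iso}$.

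The main obstacle, and the only part that requires actual work on $\mathcal{G}$ rather than formal category theory, is the implicit exhaustiveness claim that no other member of $\{\mathcal{G},\mathcal{G}_{iso}\}^{3}$ yields a model structure. Here I would point to two failure modes. First, $(\mathcal{G}_{iso},\mathcal{G}_{iso})$ fails the factorization axiom, since a composite of two isomorphisms is an isomorphism and the category contains non-isomorphisms, so the five combinations whose induced pair of acyclic-cofibration-and-fibration or cofibration-and-acyclic-fibration classes degenerates to $(\mathcal{G}_{iso},\mathcal{G}_{iso})$ are ruled out. Second, $(\mathcal{G},\mathcal{G})$ fails the lifting property: taking the inclusion $P\hookrightarrow E$ of a vertex into the edge $E$ as the left leg and the fold $E\twoheadrightarrow P$ as the right leg, with identities on top and bottom, produces a commutative square with no diagonal, ruling out the case where $\mathcal{M}_{we}=\mathcal{M}_{cof}=\mathcal{M}_{fib}=\mathcal{G}$. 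Together these two observations knock out exactly the five non-listed combinations.
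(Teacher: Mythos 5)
The paper actually states this theorem with no proof at all, so there is nothing to compare against line by line; your write-up supplies the standard argument the author evidently considered routine. The positive half is correct: the retract trick (test a map against itself with identities on top and bottom) shows that the right lifting class of all of $\mathcal{G}$ is exactly $\mathcal{G}_{iso}$ and dually, the trivial factorizations $m=\mathrm{id}\circ m=m\circ\mathrm{id}$ give both weak factorization systems, and the two-of-three check is as easy as you say. Reading off the three listed structures from the two WFSes via Definition \ref{shorterAxioms} is exactly right, and it is good that you address exhaustiveness over the eight combinations, which the bare statement leaves implicit.

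There are two blemishes in the exhaustiveness part, one of which is a genuine error. First, the bookkeeping: only \emph{four} of the five unlisted combinations have an induced factorization pair degenerating to $(\mathcal{G}_{iso},\mathcal{G}_{iso})$, namely those with at least two of the three classes equal to $\mathcal{G}_{iso}$; your closing sentence gets the count right but the intermediate claim of ``five'' does not. Second, and more seriously, your counterexample for the combination $\mathcal{M}_{we}=\mathcal{M}_{cof}=\mathcal{M}_{fib}=\mathcal{G}$ does not exist in this category: the ``fold'' $E\twoheadrightarrow P$ is not a graph homomorphism, since identifying the two endpoints of the edge of $E$ would require a loop at the single vertex of $P$, and $P$ has none (indeed $\mathrm{Hom}(E,P)=\emptyset$); moreover with $P\hookrightarrow E$ as left leg and a map $E\to P$ as right leg, ``identities on top and bottom'' do not even typecheck. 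The repair is immediate and uses the very fact that broke your square: in this combination every morphism is an acyclic cofibration and every morphism is a fibration, so the lifting axiom forces every morphism to lift against itself, hence to be an isomorphism by the retract trick; taking the square with $P\hookrightarrow E$ on both vertical legs and identities on top and bottom, a diagonal would be a morphism $E\to P$, and none exists. With that substitution the argument is complete.
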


As a first nontrivial example, we define a model structure for which the homotopy type of a graph is given by its set of connected components.
\begin{theorem}
Let three subcategories of $\mathcal{G}$ called weak equivalences, cofibration and fibrations, be defined as follows.
\begin{itemize}
\item The morphisms inducing isomorphisms between the sets of connected components of the two graphs are the weak equivalences $\mathcal{CC}_{we}$.
\item The trivial subcategory of all morphisms is the subcategory of cofibrations $\mathcal{CC}_{cof}$.
\item The fibrations $\mathcal{CC}_{fib}$ are the maps satisfying the right lifting properties with respect to the acyclic cofibrations $\mathcal{CC}_{cof}$.
\end{itemize} 
The triple of subcategories $(\mathcal{CC}_{we},\mathcal{CC}_{cof},\mathcal{CC}_{fib})$ forms a model structure $\mathbb{CC}$ on $\mathcal{G}$. Its homotopy category is (isomorphic to) the category of finite sets. According to this model structure, the homotopy type of a graph is its set of connected components.
\end{theorem}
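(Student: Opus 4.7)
The plan is to verify the compact axioms of Definition \ref{shorterAxioms}. The 2-of-3 axiom for $\mathcal{CC}_{we}$ is immediate, since $\pi_0$ is a functor from $\mathcal{G}$ to finite sets and bijections there satisfy 2-of-3. For the weak factorization system $(\mathcal{CC}_{cof},\mathcal{CC}_{fib}\cap\mathcal{CC}_{we})$, I would first observe that any acyclic fibration $g:X\to Y$ has the right lifting property against itself (being a weak equivalence); feeding the identities into the square produces a two-sided inverse, so the acyclic fibrations are exactly the isomorphisms. Since $\mathcal{CC}_{cof}$ is all of $\mathcal{G}$, this system reduces to (all morphisms, isomorphisms): the factorization $f=\mathrm{id}\circ f$ is trivial, and maximality reduces to the elementary observation that a morphism against which every morphism lifts on the left must be an isomorphism.

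The real content is the second weak factorization system $(\mathcal{CC}_{we},\mathcal{CC}_{fib})$, which boils down to a factorization of any $f:G\to H$ as a weak equivalence followed by a fibration. Let $C_1,\ldots,C_k$ be the connected components of $G$, and let $D_i\subseteq H$ be the component of $H$ containing $f(C_i)$. I would set
\[
X \;:=\; \bigsqcup_{i=1}^{k} D_i,
\]
the graph-theoretic disjoint union of one separate copy of $D_i$ per component of $G$, with $h:G\to X$ sending $C_i$ into its copy via $f|_{C_i}$ and $g:X\to H$ the canonical inclusion on each summand. Then $gh=f$, and $h$ induces the bijection $C_i\mapsto$ ($i$-th copy of $D_i$) on connected components, so $h\in\mathcal{CC}_{we}$.

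I expect the main obstacle to be showing that $g$ is a fibration. Given a weak equivalence $h':A\to B$ with a commutative square $(u:A\to X,\,v:B\to H)$, the lift $\ell:B\to X$ is built one component of $B$ at a time. Because $h'$ is a $\pi_0$-bijection, each component $\beta$ of $B$ has a unique preimage component $\alpha$ in $A$. The image $u(\alpha)$ is connected, hence lies in a single summand $D_i$ of $X$, and commutativity forces $v(\beta)\subseteq D_i$ in $H$; since $g$ restricts to an isomorphism from this summand to $D_i\subseteq H$, one lifts $v|_\beta$ through it to get $\ell|_\beta$, and the identity $\ell h'|_\alpha=u|_\alpha$ follows by uniqueness of the lift. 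The pieces glue to a graph homomorphism because distinct components of $B$ are independent. Once factorization exists, the maximal-lifting property closes by the standard retract argument: any $f\in{}^{\pitchfork}\mathcal{CC}_{fib}$ lifts against the fibration half of its own factorization, exhibiting $f$ as a retract of the weak-equivalence half, and $\mathcal{CC}_{we}$ is closed under retracts because $\pi_0$ is functorial.

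For the homotopy category, let $T:\mathsf{FinSet}\to\mathcal{G}$ send $S$ to the graph on $S$ with a loop at every vertex. A direct computation gives $\mathrm{Hom}_{\mathcal{G}}(G,T(S))=\mathrm{Hom}_{\mathsf{Set}}(\pi_0(G),S)$, so $\pi_0\dashv T$, with counit the identity and unit $\eta_G:G\to T(\pi_0(G))$ a weak equivalence (since $\pi_0(\eta_G)$ is the identity). Passing to the localization ${\mathcal{CC}_{we}}^{-1}\mathcal{G}$, the unit becomes a natural isomorphism and the counit remains the identity, so the induced $\bar\pi_0$ and $\bar T$ are an adjoint equivalence, identifying the homotopy category with the category of finite sets and the homotopy type of $G$ with $\pi_0(G)$.
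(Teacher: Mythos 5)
Your proof is correct and follows essentially the same route as the paper: the key factorization of $f:G\to H$ through the disjoint union of one copy of the target component $D_i$ per component $C_i$ of $G$ is exactly the paper's $f=f'_{fib}\circ f_{fib}\circ f_{acof}$ (you merely keep the fibration half as a single map rather than splitting off the identification of duplicate copies and the adjunction of missed components). The only cosmetic divergences are that you establish maximality of the lifting systems by the general retract argument where the paper exhibits two explicit generating acyclic cofibrations, and that you substantiate the homotopy-category claim with the $\pi_0\dashv T$ adjunction, a detail the paper leaves as ``straightforward''; both are sound.
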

\begin{proof}
We begin by checking that the axioms in Definition \ref{shorterAxioms} are satisfied. The only non-obvious part is that the acyclic cofibrations and the fibrations form a weak factorization system. 
\begin{figure}
\centering
\includegraphics[ width=12cm ]{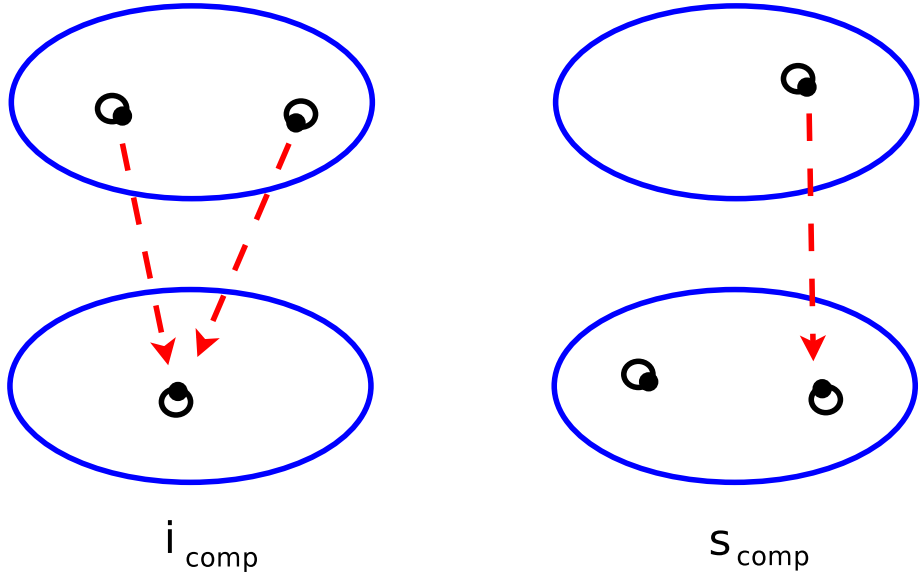}
\caption{The two morphisms determining the acyclic cofibrations by their lifting property.}
\label{fig:connectedCompFib}
\end{figure}
That the acyclic cofibrations\footnote{By a slight abuse of language, that will often be repeated, before proving that we have a model structure, we use the term acyclic (co)fibrations to denote maps that are both weak equivalences and (co)fibrations.} are exactly the morphisms lifting on the left of the fibrations is a consequence of the following fact. They are exactly the morphisms lifting on the left of the morphisms $s_{comp}$ and $i_{comp}$ of Figure \ref{fig:connectedCompFib}. By definition, the fibrations have the appropriate lifting property.

To prove that any morphism can be factorized in an acyclic cofibration and a fibration, we begin by characterizing the fibrations.
A morphism  $f:G\rightarrow H$ lifts on the right of all acyclic cofibrations exactly if the inverse image of any connected component $H_i\subset H$ is a (possibly empty) disjoint union of copies of $H_i$. Let $f:G\rightarrow H$ be any morphism of $\mathcal{G}$. Let $f_i$ be the restrictions of $f$ to the $i$-th connected components of $G$ and the component in $H$ containing its image. We define $f_{acof}$ as the coproduct of the $f_i$'s. (The coproduct in the category of morphisms is the ``disjoint union'' of the morphisms in the intuitive way.) Clearly, $f_{acof}$ is an acyclic cofibration. We then see that $f$ can be factorized as $f= f'_{fib}\circ f_{fib}\circ f_{acof}$, where $f_{fib}$ is the morphism identifying the different copies of the same component of $H$ that are in the codomain of $f_{acof}$, and $f'_{fib}$ is the injection which ``adds'' the connected components of $H$ with empty preimage by $f$. This provides the necessary factorization, since $f'_{fib}\circ f_{fib}$ is a fibration.   

The assertions about the homotopy category and homotopy types are straightforward consequences of the definition of the weak equivalences and Theorem \ref{homotopyAsLocalization}.
\end{proof}

The last model structure only had isomorphisms as acyclic fibrations. We construct a simple model structure where the acyclic cofibrations are the isomorphisms. Its homotopy category is the category of finite graphs in which every vertex is incident to at least one edge.

\begin{definition}
If each vertex of a graph is adjacent to at least one edge, we call this graph {\it furbished}. The {\it furbished part} of a graph is its greatest furbished induced subgraph.
\end{definition}

\begin{theorem}
Let three sets of maps be defined as follows.
\begin{itemize}
\item The morphisms inducing isomorphisms on the furbished part are the weak equivalences $\mathcal{E}_{we}$.
\item The trivial subcategory of all morphisms is the subcategory of fibrations $\mathcal{E}_{fib}$.
\item The cofibrations $\mathcal{E}_{cof}$ are the maps lifting left of the acyclic fibrations $\mathcal{E}_{fib}$.
\end{itemize} 
The triple of subcategories $(\mathcal{E}_{we},\mathcal{E}_{cof},\mathcal{E}_{fib})$ forms a model structure $\mathbb{E}$. Its homotopy category is (isomorphic to) the category of furbished graphs and the homotopy type of a graph is its furbished part.
\end{theorem}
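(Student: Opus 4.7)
The plan is to verify the axioms of Definition \ref{shorterAxioms}. The assignment $G\mapsto \text{Furb}(G)$ is a functor $\mathcal{G}\to\mathcal{G}$, since any homomorphism restricts to furbished parts (a vertex incident to an edge maps to one incident to an edge). By definition $f\in\mathcal{E}_{we}$ iff $\text{Furb}(f)$ is an isomorphism, so the two-of-three axiom follows from two-of-three for isomorphisms, and the closure of $\mathcal{E}_{we}$ under retracts follows from functors and isomorphisms preserving retracts.

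For the weak factorization system $(\mathcal{E}_{cof}\cap\mathcal{E}_{we},\,\mathcal{E}_{fib})$, note that $\mathcal{E}_{fib}=\mathcal{G}$, so any map in $\mathcal{E}_{cof}\cap\mathcal{E}_{we}$ lifts left of all morphisms; applying this to its own identity--identity square produces a two-sided inverse, forcing the map to be an isomorphism. Conversely isomorphisms trivially lie in both classes, giving $\mathcal{E}_{cof}\cap\mathcal{E}_{we}=\mathcal{G}_{iso}$, and the trivial factorization $f=f\circ\text{id}$ supplies this WFS. For the pair $(\mathcal{E}_{cof},\mathcal{E}_{we})$, one inclusion holds by definition. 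The reverse inclusion---every map lifting right of all cofibrations is a weak equivalence---follows from the factorization axiom via the standard retract argument: factor $f=w\circ c$ with $c$ a cofibration and $w$ a weak equivalence; since $f$ lifts right of $c$, $f$ is a retract of $w$ and hence a weak equivalence by retract-closure.

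The main obstacle is constructing, for an arbitrary $f\colon A\to B$, a factorization $A\xrightarrow{c} X\xrightarrow{w} B$ with $c\in\mathcal{E}_{cof}$ and $w\in\mathcal{E}_{we}$. The candidate is a hybrid of $A$ and $B$ whose furbished part matches that of $B$: one takes $V_X = V_A\sqcup(V_B\setminus f(V_A))$, carries over the edges of $A$ on the $V_A$ side and the relevant edges of $B$ on the other, lets $c$ be the inclusion of $V_A$, and sets $w$ to restrict to $f$ on $V_A$ and to the identity on the extra vertices, so that $w$ induces the identity on furbished parts. The delicate point is showing $c$ is a cofibration. Test weak equivalences such as $Y\hookrightarrow Y\sqcup\{\ast\}$ and the fold $Y\sqcup\{\ast\}\to Y$ force $c$ to send isolated vertices of $A$ to isolated vertices of $X$ and to be injective on $\text{Furb}(A)$; when the naive $X$ fails these conditions, it must be refined by adjoining a fresh isolated vertex for each isolated vertex of $A$ mapped by $f$ into $\text{Furb}(B)$ and by handling collisions of $f|_{\text{Furb}(A)}$, all while preserving $w$ as a weak equivalence. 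Once $c$ preserves isolation and is injective on furbished vertices, lifts against any weak equivalence can be assembled from the inverse of the induced iso on furbished parts together with a free choice on isolated vertices.

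Finally, by Theorem \ref{homotopyAsLocalization} the homotopy category is the localization at $\mathcal{E}_{we}$; since $\text{Furb}(G)\hookrightarrow G$ is always a weak equivalence, $G$ becomes canonically isomorphic to $\text{Furb}(G)$ in the localization. The functor $\text{Furb}$ inverts all weak equivalences and therefore descends to an equivalence between the homotopy category and the full subcategory of $\mathcal{G}$ spanned by the furbished graphs, identifying the furbished part of $G$ as its homotopy type.
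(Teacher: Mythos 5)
Your reduction to Definition \ref{shorterAxioms}, the two-of-three and retract arguments via functoriality of the furbished part, the identification of the acyclic cofibrations with the isomorphisms, and the retract argument showing that the maps lifting on the right of all cofibrations are exactly the weak equivalences are all sound. The gap is in the one step you yourself flag as delicate: the factorization $f=w\circ c$ with $c\in\mathcal{E}_{cof}$ and $w\in\mathcal{E}_{we}$. Your intermediate object keeps all of $A$, with its edges, inside $X$ and lets $w$ restrict to $f$ on $V_A$; then the furbished part of $X$ contains that of $A$, and $w$ cannot induce an isomorphism on furbished parts whenever $f$ fails to be injective there. Concretely, for the fold map $E+E\to E$ of the one-edge graph, your $X$ is $E+E$ and $w$ is the fold itself, which is not a weak equivalence. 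Moreover, the constraint guiding your proposed repair --- that a cofibration must be injective on the furbished part of its domain --- is false: \emph{every} morphism between furbished graphs is a cofibration (in a square over a weak equivalence $g:Y\to Z$, both horizontal maps land in the furbished parts, and composing with the inverse of the induced isomorphism between them produces the lift), the fold $E+E\to E$ included. Worse, if that constraint were genuinely necessary, the factorization axiom would fail outright for any $f$ collapsing two furbished vertices, since the induced maps would satisfy $\mathrm{Furb}(w)\circ\mathrm{Furb}(c)=\mathrm{Furb}(f)$ with $\mathrm{Furb}(c)$ injective, forcing $\mathrm{Furb}(w)$ to be non-injective. So the ``refinement by handling collisions'' cannot be carried out as described.

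The fix is to replace the furbished part of $A$ by that of $B$ in the intermediate object rather than keeping it: writing $A=I_A+\mathrm{Furb}(A)$ with $I_A$ the isolated vertices, take $X=I_A+\mathrm{Furb}(B)$, let $c=1_{I_A}+f|_{\mathrm{Furb}(A)}$, and let $w$ send $I_A$ into $B$ via $f$ and include $\mathrm{Furb}(B)$ identically. Then $w$ induces the identity on furbished parts, and $c$ is a coproduct of an isomorphism with a morphism between furbished graphs, hence a cofibration --- either by the direct lifting argument above, or, as the paper does, by generating all morphisms between furbished graphs from $i_{edge}$ and $s_{edge}$ under pushout, coproduct and composition using Theorem \ref{wfs}. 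With this replacement the remainder of your outline, including the identification of the homotopy category, goes through.
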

\begin{proof} 
We check that the axioms in Definition \ref{shorterAxioms} are satisfied. The only non-obvious part is that the cofibrations and the acyclic fibrations form a weak factorization system. 
\begin{figure}
\centering
\includegraphics[width=12cm ]{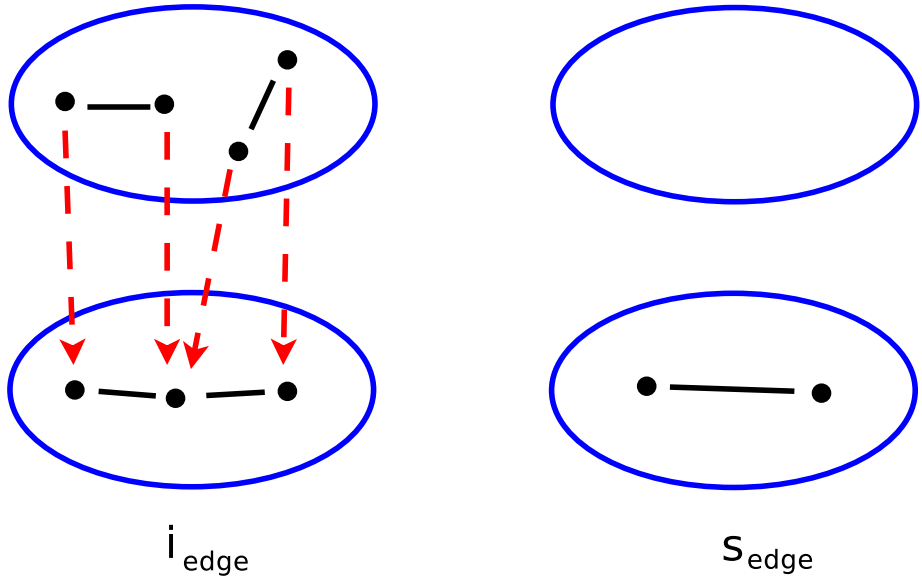}
\caption{The two morphisms determining the acyclic fibrations by their lifting property.}
\label{fig:isolatedPointsCof}
\end{figure}
That the acyclic fibrations are exactly the morphisms lifting on the right of the cofibrations is a consequence of the following. They are exactly the morphisms lifting on the right of the morphisms $i_{edge}$ and $s_{edge}$ of Figure \ref{fig:isolatedPointsCof}. By definition, the cofibrations have the appropriate lifting property. It remains to prove the factorizability of morphisms in a cofibration and an acyclic fibration.

Since the cofibrations and the acyclic fibrations form a lifting system, by Theorem \ref{wfs}, the cofibrations are closed under taking coproducts, pushouts and compositions. We deduce that any surjective morphism between furbished graphs are cofibrations. This comes from the fact that all such morphisms are compositions of surjective morphisms in which at most one vertex in the codomain has two preimages and those last morphisms are pushouts of $i_{edge}$. Furthermore, any injection between furbished graphs is a composition of a sequence of morphisms which are coproducts of an isomorphism and $s_{edge}$, and a surjection. We conclude that any morphism between furbished graphs is a cofibration. 
Any morphism between graphs induces by restriction a morphism between their maximal furbished subgraphs. The morphisms $i_{edge}$ and $s_{edge}$ lift on the left of any morphism inducing isomorphism on the furbished part. Consequently, all such morphisms are acyclic fibrations. 

Any morphism is a composition of the coproduct of an isomorphism and a morphism between furbished graphs, and a morphism inducing isomorphism on the furbished part. In other words, every morphism can be factorized in a cofibration and an acyclic fibration. 

The assertions about the homotopy category and homotopy types are easy consequences of the definition of the weak equivalences and Theorem \ref{homotopyAsLocalization}.
\end{proof}

\subsection{The core model structure}
\label{coreModel}

\begin{definition}
The {\it core} of a graph $G$ is its smallest retract $G_{core}$. A graph {\it is a core} if it has no smaller retract.
\end{definition}
\begin{figure}
\centering
\includegraphics{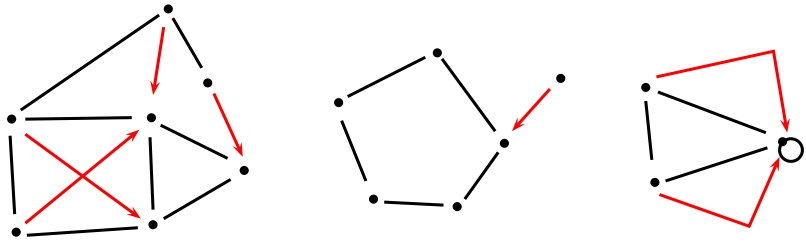}
\caption{Examples of graphs retracting to their cores.}
\end{figure}
\begin{lemma}[Folklore]
\label{powerOfEndo}
Every endomorphism of a finite set has a power which is idempotent.
\end{lemma}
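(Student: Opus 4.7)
The plan is a standard pigeonhole argument on the iterates of $f$. Since $X$ is finite, so is the set $X^X$ of endomaps of $X$, and hence the sequence $f, f^2, f^3, \ldots$ cannot consist of pairwise distinct elements. Choose integers $m < n$ with $f^m = f^n$ and set $d = n - m > 0$. Composing the equality $f^m = f^{m+d}$ repeatedly with $f$ on either side, an immediate induction yields $f^k = f^{k+d}$ for every $k \geq m$, and more generally $f^k = f^{k+id}$ for every $k \geq m$ and every integer $i \geq 0$.

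To exhibit an idempotent iterate, I would choose an exponent $k$ that is simultaneously at least $m$ (so that we have entered the periodic regime) and divisible by $d$ (so that we can absorb another $k$ steps using periodicity). The uniform choice $k = md$ works, since $k \geq m$ (as $d \geq 1$) and $k$ is a multiple of $d$. Then $2k - k = k$ is a nonnegative multiple of $d$, so the periodicity statement above applies with the base exponent $k \geq m$ and gives $f^{2k} = f^{k}$. This equality reads $f^k \circ f^k = f^k$, which is exactly the desired idempotence.

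There is no real obstacle in this argument; the only thing to be careful about is to keep the preperiod length $m$ and the period $d$ conceptually separate, and to pick a single exponent that satisfies both the ``large enough'' and ``divisible enough'' constraints. The product $md$ handles both conditions simultaneously, avoiding any case distinction.
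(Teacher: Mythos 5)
Your argument is correct: the pigeonhole step gives eventual periodicity $f^k=f^{k+id}$ for $k\geq m$, $i\geq 0$, and the exponent $k=md$ is both at least $m$ and a multiple of $d$, so $f^{2k}=f^k$ as claimed (and $md\geq 1$, so this really is a positive power). However, it takes a different route from the paper. The paper's proof is structural: it observes that $f$ restricts to a permutation of its eventual image $I$, picks $n$ large enough that $f^n$ maps everything into $I$, and then uses the group-theoretic fact that a permutation of $I$ has order dividing $|I|!$ to conclude that $f^{n\cdot|I|!}$ is idempotent. That phrasing is deliberately chosen to exhibit the lemma as a generalization of (and a consequence of) the statement that every element of a finite group has a power equal to the identity, which is the point the paper makes immediately afterwards; it also makes visible that the image of the resulting idempotent is exactly the eventual image of $f$, which is relevant to the subsequent discussion of cores as retracts. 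Your proof is more self-contained and purely combinatorial --- it never needs to identify the eventual image or invoke anything about permutations --- at the cost of obscuring that structural picture. Both are complete and standard.
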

\begin{proof}
Since an endomorphism $f$ acts as a permutation on its eventual image, if $f^n$, for some $n>0$, sends all elements of its domain to the eventual image $I$ of $f$, $f^{n\cdot|I|!}$ is an idemptotent. 
\end{proof}
This is a nice generalization (and a consequence) of the fact that in a finite group, any element has a power which is the identity.
We notice that in the category of sets, among endomorphisms, the idempotents are the retractions.

\begin{theorem}[Hell, Ne\v{s}et\v{r}il {\cite[Section 2.8]{nesetril}}]
A graph is a core if and only if it has no homomorphism to a proper subgraph.
\end{theorem}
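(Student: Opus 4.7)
The proof splits cleanly into two directions, with Lemma \ref{powerOfEndo} providing the main engine for the harder one.

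For the \emph{only if} direction I would argue the contrapositive: if $G$ is not a core, it admits a homomorphism to a proper subgraph. Take a retract $H$ of $G$ with strictly fewer vertices, realised by $r\colon G\to H$ and $s\colon H\to G$ with $r\circ s=1_H$. The equation $r\circ s=1_H$ forces $s$ to be injective on vertices, so the image $s(H)$ (the subgraph of $G$ on vertex set $s(V_H)$ with edges $\{(s(u),s(v)) : (u,v)\in E_H\}$) has strictly fewer vertices than $G$, making it a proper subgraph, and $s\circ r\colon G\to s(H)$ is the required homomorphism.

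For the \emph{if} direction, suppose to the contrary that $f\colon G\to H'$ is a homomorphism into a proper subgraph $H'\subsetneq G$. Composing with the inclusion yields an endomorphism $\tilde f\colon G\to G$. Applying Lemma \ref{powerOfEndo} to the induced self-map of the finite set $V_G$, some power $e:=\tilde f^n$ is idempotent, and the subgraph $K\subseteq G$ on vertex set $e(V_G)$ with edges $\{(e(u),e(v)) : (u,v)\in E_G\}$ is a retract of $G$ (the inclusion $K\hookrightarrow G$ is a section for $e$ by idempotency). To contradict the core hypothesis I need $|V_K|<|V_G|$. If $V_{H'}\subsetneq V_G$ this is immediate from $e(V_G)\subseteq\tilde f(V_G)\subseteq V_{H'}$. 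The delicate case, and the main obstacle, is $V_{H'}=V_G$ with $E_{H'}\subsetneq E_G$: here I would argue that $\tilde f$ cannot be a bijection on vertices, since a vertex bijection induces an injection and hence (by finiteness) a bijection on the edge set, forcing $\tilde f$ to be an automorphism of $G$, which is incompatible with $\tilde f(E_G)\subseteq E_{H'}\subsetneq E_G$; therefore $\tilde f$ (and hence $e$) collapses some pair of vertices, so $|V_K|<|V_G|$ as required.
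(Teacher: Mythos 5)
Your proof is correct and follows the same route the paper intends: the paper's proof is the single line ``this is a direct consequence of Lemma \ref{powerOfEndo},'' and your argument is exactly the expansion of that remark, using the idempotent power of the endomorphism to manufacture a smaller retract. The only thing you add beyond what the paper sketches is the careful handling of the case where the proper subgraph is spanning (same vertices, fewer edges), which is a genuine detail worth spelling out and which you resolve correctly via the automorphism argument.
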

\begin{proof}
This is a direct consequence of Lemma \ref{powerOfEndo}.
\end{proof}
\begin{corollary}
An endomorphism of a core is an automorphism.
\end{corollary}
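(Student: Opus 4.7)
The plan is to reduce the problem to the previous lemma and exploit finiteness. Let $f: G \to G$ be an endomorphism of a core $G$. By Lemma \ref{powerOfEndo}, some power $f^n$ is idempotent. I would first verify that the image of an idempotent graph endomorphism $e = f^n$, endowed with the induced subgraph structure from $G$, is a retract of $G$: the inclusion $e(G) \hookrightarrow G$ is a section of the corestriction $G \to e(G)$, and both are graph homomorphisms because $e$ is, and idempotency gives $e\circ \iota = \mathrm{id}_{e(G)}$.

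Since $G$ is a core, it has no proper retract, so $e(G) = G$, i.e., $f^n$ is surjective on vertices. Because $V_G$ is finite, $f^n$ is then bijective on vertices. Combining bijectivity with idempotency at the level of vertex-maps ($f^n \circ f^n = f^n$) and cancelling by the set-theoretic inverse of $f^n$, we obtain $f^n = \mathrm{id}_G$ on vertices, hence as graph homomorphisms.

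From $f^n = \mathrm{id}_G$ it follows that $f^{n-1}$ is a two-sided inverse of $f$ in $\mathcal{G}$, so $f$ is an automorphism. The only slightly delicate step is the first one, making sure that the image of the idempotent, equipped with the induced subgraph structure, is genuinely a retract in the graph-theoretic sense (so that the core hypothesis applies); everything after that is bookkeeping with finite sets.
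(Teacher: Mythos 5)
Your proof is correct and follows essentially the same route as the paper, which derives this corollary from Lemma \ref{powerOfEndo} via the fact that a core admits no homomorphism onto a proper subgraph; your explicit verification that the image of the idempotent power is a retract, and the final inversion step producing $f^{n-1}$ as a two-sided inverse, are exactly the details the paper leaves implicit.
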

\begin{theorem}[Hell, Ne\v{s}et\v{r}il {\cite[Section 2.8]{nesetril}}]
The core of a graph is unique up to isomorphism.
\end{theorem}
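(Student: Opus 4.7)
My plan is to show that any two cores $C_1$ and $C_2$ of a graph $G$ are isomorphic, leveraging the corollary just established: every endomorphism of a core is an automorphism. Since both $C_1$ and $C_2$ are retracts of $G$, I fix sections $s_i : C_i \to G$ and retractions $r_i : G \to C_i$ with $r_i \circ s_i = 1_{C_i}$ for $i = 1, 2$. This data is the only input the argument needs from the hypothesis ``both are retracts of the same graph''.

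Next I form the morphisms $f := r_1 \circ s_2 : C_2 \to C_1$ and $g := r_2 \circ s_1 : C_1 \to C_2$, and consider the composites $\alpha := g \circ f \in \mathrm{End}(C_2)$ and $\beta := f \circ g \in \mathrm{End}(C_1)$. By the corollary, both $\alpha$ and $\beta$ are automorphisms. Then $\alpha^{-1} \circ g$ is a left inverse of $f$, while $g \circ \beta^{-1}$ is a right inverse. Any morphism in any category admitting both a left and a right inverse is an isomorphism --- indeed, the standard identity
\[
\alpha^{-1} \circ g \;=\; \alpha^{-1} \circ g \circ f \circ g \circ \beta^{-1} \;=\; g \circ \beta^{-1}
\]
forces the two inverses to coincide --- so $f : C_2 \to C_1$ is the desired isomorphism in $\mathcal{G}$.

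The argument is a pure diagram chase once the preceding corollary is in hand, so I do not anticipate any serious obstacle. The only conceptual point worth flagging is that the two characterizations of ``core'' at play --- smallest retract on the one hand, and graph admitting no homomorphism to a proper subgraph on the other --- must be compatible in order for the corollary to apply to each $C_i$; but this is precisely the content of the preceding theorem of Hell and Ne\v{s}et\v{r}il. With that compatibility in place, the proof reduces to the four lines above.
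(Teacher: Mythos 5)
Your proposal is correct and follows essentially the same route as the paper: compose the retractions/sections to get maps $f:C_2\to C_1$ and $g:C_1\to C_2$, observe that $g\circ f$ and $f\circ g$ are automorphisms by the preceding corollary, and conclude that $f$ is an isomorphism. If anything, your final step (the left-inverse/right-inverse identity) is slightly cleaner than the paper's ``injective and surjective, hence an isomorphism,'' since for graph homomorphisms bijectivity alone does not immediately give an isomorphism and your argument supplies the inverse as an honest composite of homomorphisms.
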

\begin{proof}
If a graph $G$ retracts to both $H_1$ and $H_2$ that are cores, the retractions induce maps in both directions between $H_1$ and $H_2$. The compositions of those maps must be automorphisms and thus both maps are both surjective and injective. We deduce that both maps are isomorphisms and thus the core is unique up to isomorphism.
\end{proof}

Considered as a subgraph, the core of a graph needs not be unique, although its isomorphism type is. Any graph $G$ is equipped with sections from its core and retractions to its core. A map between graphs $f:G\rightarrow H$ induces maps $f_{core}:G_{core}\rightarrow H_{core}$ by composition. The induced map is unique up to automorphisms of its range and of its domain.
\[
\xymatrix{
G_{core} \ar[r]\ar@/^1pc/[rrr]^{f_{core}} & G \ar[r]_{f} & H \ar[r] & H_{core}}
\]  
\begin{corollary}
\label{isoCore}
A map $f:G\rightarrow H$ induces isomorphism on the cores if and only if $G$ and $H$ have isomorphic cores.
\end{corollary}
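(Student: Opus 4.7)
The corollary is a biconditional whose ``only if'' direction is immediate: if $f_{core}:G_{core}\to H_{core}$ is an isomorphism, then its source and target are certainly isomorphic graphs. My plan therefore concentrates entirely on the ``if'' direction.

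Suppose then that $G_{core}\cong H_{core}$, and fix an isomorphism $\phi:H_{core}\to G_{core}$. The key move is to convert the map $f_{core}:G_{core}\to H_{core}$ between two (possibly distinct) cores into an endomorphism of a single core, where rigidity can be brought to bear. Concretely, I form the composite $\phi\circ f_{core}:G_{core}\to G_{core}$, which is an endomorphism of the core $G_{core}$. By the preceding corollary stating that every endomorphism of a core is an automorphism, this composite is an automorphism of $G_{core}$. Composing back with $\phi^{-1}$ then yields $f_{core}=\phi^{-1}\circ(\phi\circ f_{core})$, exhibiting $f_{core}$ itself as an isomorphism, which is what we wanted.

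One small hygienic point deserves a brief mention: the induced map $f_{core}$ is defined only up to automorphisms of $G_{core}$ and $H_{core}$, but this causes no trouble because being an isomorphism is stable under pre- or post-composition with an automorphism, so the conclusion is independent of the chosen representative. I do not anticipate any real obstacle; the essential content is the one-line reduction to the endomorphism-is-automorphism corollary via the assumed isomorphism $\phi$, and the ``hard part'' amounts only to recognizing that this reduction is available.
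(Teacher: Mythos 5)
Your proof is correct and follows exactly the route the paper intends: the paper leaves this corollary without an explicit proof, but it is placed immediately after the statement that every endomorphism of a core is an automorphism, and your reduction of $f_{core}$ to such an endomorphism via the assumed isomorphism $\phi$ is precisely the expected argument. Your remark that the conclusion is independent of the choice of representative for $f_{core}$ is a correct and welcome precision.
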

\begin{corollary}
\label{mapCore}
A map $f:G\rightarrow H$ induces isomorphism on the cores if and only if there is a map from $H$ to $G$.
\end{corollary}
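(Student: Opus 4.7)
The plan is to deduce both directions from Corollary \ref{isoCore}, which asserts that $f$ induces an isomorphism on the cores exactly when $G_{core} \cong H_{core}$. Under this reduction, the statement becomes: given that a morphism $f: G \to H$ exists, a morphism $H \to G$ exists if and only if $G_{core} \cong H_{core}$. The forward direction is then immediate: assuming $G_{core}\cong H_{core}$, compose the canonical retraction $H\twoheadrightarrow H_{core}$, an isomorphism $H_{core}\xrightarrow{\cong} G_{core}$, and the canonical section $G_{core}\hookrightarrow G$ to obtain the desired morphism $H\to G$.

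For the reverse direction, assume a morphism $g: H\to G$ is given. I would define $\phi: G_{core}\to H_{core}$ as the map induced by $f$ via the section/retraction recipe displayed before Corollary \ref{isoCore}, and $\psi: H_{core}\to G_{core}$ as the analogous map induced by $g$. The composites $\psi\circ\phi$ and $\phi\circ\psi$ are then endomorphisms of the cores $G_{core}$ and $H_{core}$ respectively, and by the Corollary immediately preceding Corollary \ref{isoCore} (every endomorphism of a core is an automorphism) both are isomorphisms. A purely categorical argument finishes the job: $(\psi\circ\phi)^{-1}\circ \psi$ is a left inverse of $\phi$ and $\psi\circ(\phi\circ\psi)^{-1}$ is a right inverse, so $\phi$ admits two-sided inverses and is itself an isomorphism. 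Hence $G_{core}\cong H_{core}$, and Corollary \ref{isoCore} gives that $f$ induces an isomorphism on the cores.

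I do not expect a serious obstacle. The only point meriting care is that both $\phi$ and $\psi$ depend on choices of sections and retractions; however, the remark preceding Corollary \ref{isoCore} observes that the induced map on cores is unique up to automorphisms of source and target, and the property ``is an isomorphism'' is stable under pre- and post-composition with automorphisms, so the argument is independent of choices. The entire proof is therefore a short composition of the two preceding corollaries with a one-line categorical deduction.
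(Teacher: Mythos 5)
Your argument is correct and matches the reasoning the paper leaves implicit: the corollary is stated without proof, being exactly the content of the surrounding discussion (maps in both directions give induced core endomorphisms, which are automorphisms by the preceding corollary, hence the cores are isomorphic, and conversely an isomorphism of cores composed with the retraction and section gives a map $H\to G$). Your handling of the choice-dependence of the induced map is the right point to flag, and no further care is needed.
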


\begin{definition}
\label{quasiOrder}
 We define a partial order relation on the objects of $\mathcal{G}$ by setting, for graphs $G$ and $H$, $G\geq H \Leftrightarrow \exists h\in Hom(G,H)$. We call $\mathcal{G}_{core}$ the poset (seen as a category) obtained by restricting the order relation to cores. On the whole category $\mathcal{G}$, the order relation is only a quasi-order, since different graphs can have the same core and will then be equivalent for the ordering.
\end{definition}

Two graphs with homomorphisms in both directions between them are equivalent in the quasi-order and any morphism between them will induce an isomorphism on their cores. Conversely, all homomorphisms of graphs inducing isomorphisms on the cores arise in this way.

\begin{theorem}
\label{coreModelStructure}
There is one model structure whose weak equivalences are the maps inducing isomorphisms on the cores and whose cofibrations are the canonical injections in a coproduct. Its acyclic fibrations are the retractions.
\end{theorem}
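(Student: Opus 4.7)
My plan is to apply the compact reformulation in Definition \ref{shorterAxioms} by verifying two-of-three and exhibiting two weak factorization systems, namely $(\mathcal{M}_{cof},\,\mathcal{M}_{we}\cap\mathcal{M}_{fib})$ and $(\mathcal{M}_{we}\cap\mathcal{M}_{cof},\,\mathcal{M}_{fib})$, with $\mathcal{M}_{fib}$ defined as the class of maps lifting on the right of the acyclic cofibrations. Two-of-three follows immediately from Corollary \ref{mapCore}: being a weak equivalence is equivalent to the existence of a backward homomorphism, and in each of the three cases the two given backward maps compose (through $f$ or $g$) to furnish the one needed for the third arrow.

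For the first weak factorization system, every $f:A\to B$ factors as
\[
A\xrightarrow{\iota_A} A\sqcup B \xrightarrow{[f,1_B]} B,
\]
in which $\iota_A$ is a canonical coproduct inclusion and $[f,1_B]$ is a retraction with section $\iota_B$. Every retraction is a weak equivalence by Corollary \ref{mapCore}. The lifting analysis goes in both directions. Given a retraction $r$ with section $s$ and a square against a coproduct inclusion $G\to G\sqcup H$ with top $u$ and bottom $v$, set the diagonal to $u$ on $G$ and $s\circ v|_H$ on $H$. Conversely, if $f$ lifts right of every cofibration, the square with top the unique map $\emptyset\to A$ and bottom $1_B$, against the cofibration $\emptyset\to B$, lifts to a section of $f$. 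For the maximality of $\mathcal{M}_{cof}$, if $f$ lifts left of every retraction, I specialize to the retraction $[f,1_B]:A\sqcup B\to B$ with top $\iota_A$ and bottom $1_B$: the resulting diagonal forces $f(A)$ to be a union of connected components of $B$ on which $f$ restricts to a graph isomorphism, so $f$ is a coproduct inclusion.

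The second factorization axiom is the substantive step. I would factor $f:A\to B$ as
\[
A\xrightarrow{\iota_A} A\sqcup(A\times B) \xrightarrow{p} B,\qquad p|_A=f,\ p|_{A\times B}=\pi_B,
\]
where $A\times B$ is the categorical product in $\mathcal{G}$. The projection $\pi_A:A\times B\to A$ witnesses $\iota_A$ as an acyclic cofibration. To show $p\in\mathcal{M}_{fib}$, take any acyclic cofibration $G\to G\sqcup H$ equipped with a chosen map $\phi:H\to G$ and any commutative square with top $u:G\to A\sqcup(A\times B)$ and bottom $v:G\sqcup H\to B$. The composite
\[
\psi:H\xrightarrow{\phi} G\xrightarrow{u} A\sqcup(A\times B)\xrightarrow{[1_A,\pi_A]} A
\]
together with $v|_H$ defines a pair map $(\psi,v|_H):H\to A\times B$, and the diagonal $w$ equal to $u$ on $G$ and to $\iota_{A\times B}\circ(\psi,v|_H)$ on $H$ satisfies $pw=v$, since $\pi_B\circ(\psi,v|_H)=v|_H$.

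The maximality of acyclic cofibrations among maps lifting left of $\mathcal{M}_{fib}$, and the identification of $\mathcal{M}_{we}\cap\mathcal{M}_{fib}$ with the class of retractions, are both handled by the standard retract argument applied to the appropriate factorization. If $f$ lifts left of every fibration, I factor $f=qj$ via the second factorization, exhibit $f$ as a retract of $j$ in the arrow category using the lift, and conclude via Corollary \ref{mapCore} applied to the composite $B\to X\to A$ that $f$ is a weak equivalence; $f$ is already a cofibration because retractions are fibrations. If $f$ is both a fibration and a weak equivalence, I factor $f=qi$ via the first factorization, exhibit $f$ as a retract of $q$ via the lift, and compose the section of $q$ with the arrow-retract data to obtain a section of $f$. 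I expect the fibrant factorization through $A\times B$ to be the main conceptual obstacle; the rest reduces to formal manipulations with lifting diagrams.
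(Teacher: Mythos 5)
Your proposal is correct and follows essentially the same route as the paper: the same two factorizations $A\to A\sqcup B\to B$ and $A\to A\sqcup(A\times B)\to B$, the same characterization of acyclic cofibrations via a backward map $H\to G$, and two-of-three via the existence of homomorphisms in both directions (Corollaries \ref{isoCore} and \ref{mapCore}). The only notable local difference is that you identify the maps lifting on the left of all retractions by testing against the single retraction $[f,1_B]:A\sqcup B\to B$ built from $f$ itself, whereas the paper tests against a fixed family of small retractions from its figures; both arguments are sound.
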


To prove Theorem \ref{coreModelStructure}, we need a little categorical lemma.
\begin{lemma}
\label{initialTerminalLifting}
The retractions are exactly the maps lifting on the right of morphisms with the initial object of the category as domain. Dually, the sections are the maps lifting on the left of the maps with terminal object as codomain.
\end{lemma}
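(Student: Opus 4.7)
The plan is a direct application of the universal properties of the initial and terminal objects. Since the two statements are formally dual (they exchange when one passes to $\mathcal{C}^{op}$, under which initial becomes terminal and retractions become sections), it is enough to establish the first one, about retractions.

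For the ``only if'' direction, I would suppose that $r \colon A \to B$ is a retraction with section $s \colon B \to A$ satisfying $r \circ s = 1_B$. Given any commutative square whose upper-left corner is the initial object $0$, i.e.\ data $f \colon 0 \to X$, $u \colon 0 \to A$, $v \colon X \to B$ with $r \circ u = v \circ f$, I would propose $h := s \circ v$ as the lift. Then $r \circ h = r \circ s \circ v = v$ gives commutativity of the lower triangle, while the upper triangle $h \circ f = u$ is automatic because any two morphisms out of $0$ coincide.

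For the ``if'' direction, I would apply the assumed lifting property to one cleverly chosen test square: take $X := B$, let $f \colon 0 \to B$ and $u \colon 0 \to A$ be the unique maps out of the initial object, and set $v := 1_B$. Commutativity of the solid square $r \circ u = 1_B \circ f$ is automatic by initiality of $0$. The resulting filler $h \colon B \to A$ satisfies $r \circ h = 1_B$, so $h$ is a section of $r$ and $r$ is a retraction. The dual statement about sections is then obtained by reading the same argument in $\mathcal{C}^{op}$.

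I do not anticipate any genuine obstacle: the content of the lemma is simply the observation that the existence of a section of $r$ is encoded by a single lifting problem against $0 \to B$, and all other candidate squares with initial domain collapse to triviality because maps out of $0$ are unique.
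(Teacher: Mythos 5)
Your proof is correct and is essentially the same as the paper's: both directions use the same two ingredients, namely the test square against $0\to B$ with bottom arrow $1_B$ to extract a section, and the filler $h=s\circ v$ (with the upper triangle trivial by initiality) for the converse, with the dual statement obtained by passing to $\mathcal{C}^{op}$. No issues.
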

\begin{proof}
Let $f:A\rightarrow B$ be a morphism lifting on the left of the morphism with domain the initial object and $B$ as codomain.
We obtain a commutative square where $I$ denotes the initial object:
\[\xymatrix{
I \ar[r]\ar[d] & A \ar[d]^{f} \\
B \ar[r]_{1_B} \ar@{-->}[ur]^{h} & B 
}\]
Thus $1_B=f\circ h$ and $f$ is a retraction.
If $r:A\rightarrow B$ is a retraction with section $s$,
for any commutative diagram of solid arrows: 
\[\xymatrix{
I \ar[r]\ar[d] & A \ar[d]^{r} \\
C \ar[r]_{g} \ar@{-->}[ur]^{h} & B 
}\]
We obtain a diagonal map $h=s\circ g$ that makes the whole diagram commute. This means that a retraction lifts on the right of any map with initial domain.
The rest of the lemma follows by duality. 
\end{proof}

\begin{proof}[Proof of Theorem \ref{coreModelStructure}]
The cofibrations and acyclic cofibrations of the model structures are given. Therefore, the fibrations are exactly the maps with the right lifting property with respect to the acyclic cofibrations and the model structure in question is uniquely determined.

\begin{figure}
\centering
\includegraphics[ width=12cm ]{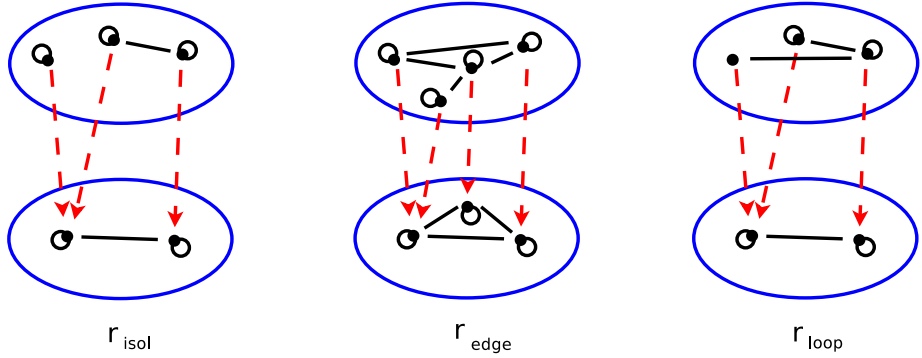}
\caption{Three retractions participating in the definition of the cofibrations by their lifting properties.}
\label{coreDefining}
\end{figure}

Using Definition \ref{shorterAxioms}, we show that we have a model structure. We proceed in four steps. We first prove that the (cofibrations, acyclic fibrations) pair and the (acyclic cofibrations, fibrations) pair are maximal lifting systems, while gaining information about the subcategories. Second,  we check that the maps lifting on the right of the cofibrations really are the fibrations which are weak equivalences (The dual will be obvious). Then, we prove that the maximal lifting systems are actually weak factorization systems. We finish the proof by showing that the Two-of-three axiom holds.

\noindent
{\bf Step 1.} We begin by proving that the set of cofibrations and the set of retractions form a maximal lifting system, and thus that we must choose the retractions as acyclic fibrations. By Lemma \ref{initialTerminalLifting}, there is a maximal lifting system $(A,B)$ where $A$ contains all the maps with empty domain and $B$ all the retractions. Again by Lemma \ref{initialTerminalLifting}, we know that $B$ contains exactly the retractions.
The morphisms $r_{isol}$, $r_{edge}$ and $r_{loop}$ of Figure \ref{coreDefining} are retractions. So is the morphism $f_{il}$ of Figure \ref{fig:characterisations}. The morphisms lifting on the left of $f_{il}$ are injections by Theorem \ref{characterisations}. The morphisms lifting on the left  of $r_{edge}$ and $r_{loop}$ are morphism where no edge is added between two points that have preimages (The map $r_{edge}$ forbids the new edge to be between different points, the map $r_{loop}$ forbids new loops). We observe that in any map $f$ lifting on the left of $r_{isol}$, a vertex of the image $Im(f)$ cannot be adjacent to a vertex in the complement of the image. We conclude that any map in $A$ is a canonical injection in a coproduct. By Theorem \ref{wfs}, $A$ must contain all isomorphisms and therefore all coproducts of an isomorphism with a map with an empty domain, that is all canonical injections in a coproduct. So the cofibration and retractions form a maximal lifting system, since they are equal to the sets $A$ and $B$.

We now prove that the other pair consisting of the acyclic cofibrations and the fibrations is a maximal lifting system. The acyclic cofibrations are canonical injections in a coproduct and induce isomorphisms on the core. If a map $f:A\rightarrow A+B$ is an acyclic cofibration, there is a map $h:B\rightarrow A$ which is obtained by composition of the retraction of $B$ to the core of $A+B$ and the inclusion of the (identical) core of $A$ in $A$. The existence of this map characterizes acyclic cofibrations among the cofibrations. We notice in particular that the acyclic cofibrations are the cofibrations that are sections. We simply define the fibrations to be the maps lifting on the right of the acyclic cofibrations. Since our set of fibrations contains all maps lifting on the right of a subset of the cofibrations, it also contains the acyclic fibrations. Therefore, all the maps that lift at the left of the fibrations are canonical injections in coproducts. Since the acyclic cofibrations are sections, the fibrations contain all maps with terminal codomain, and therefore all maps lifting at their left are sections by Lemma \ref{initialTerminalLifting}. We deduce that the acyclic cofibrations and the fibrations form a maximal lifting system.

\noindent
{\bf Step 2.} We need to show that the weak equivalences lifting on the right of the acyclic cofibrations are the same as the maps lifting on the right of the cofibrations. Clearly, the retractions lift on the right of the acyclic cofibrations (actually of all cofibrations) and are weak equivalences. Therefore, we only need to demonstrate the converse. Let the weak equivalence $f:G\rightarrow H$ lift on the right of the acyclic cofibrations. Since it lifts on the right of the canonical injection $i_H:H\rightarrow H+H$ of the coproduct of $H$ with itself, it is a retraction. In other words, since $f$ is a weak equivalence, using  Corollary \ref{mapCore} to obtain a map $a:H\rightarrow G$, we can construct the following commutative diagram, which by the lifting property of acyclic cofibrations implies that $f$ is a retraction.
\[\xymatrix{
H \ar[r]^a\ar[d]_{i_1} & G \ar[d]^{f} \\
H+H \ar[r]_-{f\circ a+1_H} \ar@{-->}[ur] & H 
}\]

\noindent
{\bf Step 3.} Any map $f:G\rightarrow H$ factorizes as 
\[\xymatrix{
G \ar[r]_-{i_1}\ar@/^1pc/[rr]^{f} & G+H \ar[r]_-{f+1_H} & H
}\]
where the morphism $i_1$ is a canonical injection in a coproduct and $f+1_H$ a retraction. This finishes the proof that the cofibrations and the acyclic fibrations form a weak factorization system.

We now prove that any $f:G\rightarrow H$ factorizes in an acyclic cofibration and a fibration.
 We have
\[\xymatrix{
G \ar[r]_-{i_1}\ar@/^1pc/[rr]^{f} & G+G\times H \ar[r]_-{f+p_2} & H
},\]
where $i_1$ is the canonical injection and $p_2$ is projection of the product to its second factor.  
The map $i_1$ is an acyclic cofibration. It is a section because $G\times H$ maps to $G$ and it is clearly a canonical injection in a coproduct. The map $f+p_2$ is a fibration because it has the appropriate lifting property with respect to acyclic cofibrations. We check this by taking a commutative diagram of solid arrows:
 \[\xymatrix{
A \ar[r]_-{a}\ar[d] & G+G\times H \ar[d]^{f+p_2} \\
A+B \ar[r]_{b} \ar@{-->}[ur]^{h} & H 
}\]
such that there is a map $c:B\rightarrow A$ and constructing $h$. We set $h=i_1\circ a+i_2\circ((a \circ c)\times b)$, where $i_1$ and $i_2$ are the canonical injections in $G+G\times H$.

\noindent
{\bf Step 4.} It only remains to prove that the Two-of-three axiom holds. We consider the diagram $\xymatrix{
F \ar[r]_{f}\ar@/^1pc/[rr]^{f\circ g} & G \ar[r]_g & H}.$
 If two of the three maps $f$, $g$ and $f\circ g$ are weak equivalences,  by Corollary \ref{isoCore}, the three graphs $F,G$ and $H$ have isomorphic cores. Again by Corollary \ref{isoCore}, this implies that the three maps are weak equivalences.
 
\end{proof}
\begin{remark}
There is a strengthening of the definition of model structures, which asks for the two factorizations to be functorial. The core model structure satisfies this stronger definition, since the constructions used in the proof of factorization are completely functorial. We will in further work \cite{droz} give a more general version of this model structure for all finitely complete and finitely cocomplete categories.
\end{remark}
We now prove two basic properties of the model category obtained.

\begin{theorem}
Every object is fibrant and cofibrant in the core model structure.
\label{fibCof}
\end{theorem}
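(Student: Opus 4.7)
The plan is to verify the two parts separately, using facts already established in the proof of Theorem \ref{coreModelStructure} together with Lemma \ref{initialTerminalLifting}.

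For cofibrancy, I would show that for any graph $X$, the unique map from the initial object (the empty graph) to $X$ is a canonical injection into a coproduct. This is immediate: the empty graph $\emptyset$ is a neutral element for the coproduct, so the map $\emptyset\to X$ may be identified with the canonical injection $\emptyset\hookrightarrow \emptyset+X \cong X$. Hence every object is cofibrant.

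For fibrancy, I would appeal to the characterization of acyclic cofibrations obtained in Step~1 of the proof of Theorem \ref{coreModelStructure}, namely that the acyclic cofibrations are exactly the cofibrations that are sections. By the dual part of Lemma \ref{initialTerminalLifting}, any map whose codomain is the terminal object lifts on the right of every section. Applying this to the terminal map $X\to 1$ (where $1$ denotes the one-vertex graph with a loop), one concludes that $X\to 1$ has the right lifting property with respect to all acyclic cofibrations, and is therefore a fibration. Hence every object is fibrant.

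Neither step requires a factorization argument or any subtle construction; the main conceptual content is already packaged in the identification of acyclic cofibrations as sections and in Lemma \ref{initialTerminalLifting}, so I do not anticipate any real obstacle. If there is a point requiring a moment of care, it is checking that $\emptyset$ is genuinely a unit for $+$ in $\mathcal{G}$ (so that the inclusion $\emptyset\hookrightarrow X$ counts as a canonical coproduct injection), which is clear from the explicit description of coproducts as disjoint union.
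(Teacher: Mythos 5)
Your proof is correct and follows essentially the same route as the paper's: cofibrancy comes from maps out of the empty graph being canonical coproduct injections, and fibrancy comes from the identification of acyclic cofibrations as sections combined with the dual part of Lemma \ref{initialTerminalLifting}. No issues.
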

\begin{proof}
Since all morphisms from the empty graph are cofibrations, all graphs are cofibrant. To prove that all objects are fibrant, we need to show that a morphism $g$ from a graph $G$ to the terminal object is a fibration. As in the proof of the previous theorem, this follows from Lemma \ref{initialTerminalLifting}.
\end{proof}

\begin{theorem}
Any two morphisms with same domain and codomain are homotopic in the core model structure. 
\end{theorem}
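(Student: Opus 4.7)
My plan is to produce a cylinder object for $G$ and use its built-in slack to write down an explicit left homotopy between any two parallel arrows. By Theorem \ref{fibCof} every object of the core model structure is both fibrant and cofibrant, so left homotopy is already an equivalence relation on $\mathrm{Hom}(G,H)$ and coincides with right homotopy; it therefore suffices to exhibit a single left homotopy.

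To build a cylinder for $G$, I would apply the acyclic cofibration/fibration factorization constructed in Step 3 of the proof of Theorem \ref{coreModelStructure}, but to the fold map $\nabla : G+G \to G$ rather than a general $f$. This yields
\[
G+G \xrightarrow{\iota} (G+G)+(G+G)\times G \xrightarrow{\nabla+p_2} G,
\]
where $\iota$ is the canonical injection into the coproduct (an acyclic cofibration) and $\nabla+p_2$ is a fibration, automatically a weak equivalence by two-of-three. Hence $\mathrm{Cyl}(G) := (G+G)+(G+G)\times G$ is a cylinder object for $G$, with the two structure maps $G \to \mathrm{Cyl}(G)$ given by composing $\iota$ with the two canonical injections $G \hookrightarrow G+G$.

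Now I construct the homotopy $\varphi : \mathrm{Cyl}(G) \to H$. Because $\mathrm{Cyl}(G)$ is a coproduct of three graphs, a morphism out of it is specified by three independent restrictions, with no compatibility conditions. I would take $\varphi$ to be $f$ on the first copy of $G$, to be $g$ on the second copy of $G$, and to be the composition $f \circ p_2 : (G+G)\times G \to G \to H$ on the third summand (any morphism whatsoever would do on that summand; this is just a convenient choice). By construction $\varphi$ restricts to $f$ and $g$ on the two copies of $G$ inside the cylinder, so it is a left homotopy from $f$ to $g$.

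There is essentially no obstacle: the statement is a manifestation of how loose the notion of cofibration is in $\mathbb{M}$. The cylinder has a redundant $(G+G)\times G$ summand on which the homotopy is completely unconstrained, so no coherence between $f$ and $g$ is ever required. The only thing to verify is that the three chosen restrictions assemble into a genuine morphism out of the coproduct, which is immediate from the universal property.
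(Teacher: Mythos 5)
Your proof is correct and follows essentially the same route as the paper: both arguments rest on the fact that a cylinder object for $G$ can be taken to be a coproduct containing $G+G$ as summands, so the restrictions to the two copies of $G$ can be prescribed independently and the homotopy $f$, $g$ exists with no coherence condition. The paper takes the shortcut of observing that $G+G$ itself is already a \emph{very good} cylinder object (the identity is a cofibration and the fold map $\nabla:G+G\to G$ is a retraction, hence an acyclic fibration), so the homotopy is simply $f+g$; your enlarged cylinder $(G+G)+(G+G)\times G$ obtained from the Step~3 factorization works just as well but is not needed.
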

\begin{proof}
Since the coproduct of an object with itself is a very good cylinder object, any two morphisms are left homotopic. Because of Theorem \ref{fibCof}, we do not need to discriminate between left and right homtopies. See \cite{dwyerSpalinsky} for explanations about the general definition of homotopy between morphisms in term of cylinder objects.
\end{proof}

\begin{corollary}
The homotopy category of the core model structure is $\mathcal{G}_{core}$.
\end{corollary}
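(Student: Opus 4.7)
The plan is to unpack the definition of the homotopy category and identify it term-by-term with the poset $\mathcal{G}_{core}$.

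First I would use Theorem \ref{homotopyAsLocalization} to describe $Ho(\mathcal{G})$ as the localization of $\mathcal{G}$ at the weak equivalences. On objects, two graphs are sent to the same isomorphism class in $Ho(\mathcal{G})$ precisely when they can be connected by a zig-zag of weak equivalences, which by Corollary \ref{isoCore} happens exactly when they have isomorphic cores. So the isomorphism classes of objects of $Ho(\mathcal{G})$ are in bijection with the objects of $\mathcal{G}_{core}$, via the map $G \mapsto G_{core}$. Moreover each core $C$ is canonically equipped with a section-retraction pair between $C$ and itself that are identities, so we may choose $G_{core}$ as the distinguished representative for the class of $G$.

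Next I would compute hom-sets. By Theorem \ref{fibCof} every object is fibrant and cofibrant, so the standard description of the homotopy category gives $Ho(\mathcal{G})(G,H) = \mathcal{G}(G,H)/{\sim}$, where $\sim$ is the homotopy relation. But by the preceding theorem any two morphisms with the same domain and codomain are homotopic, so this hom-set has at most one element, and has exactly one element precisely when $\mathcal{G}(G,H)$ is non-empty. Specializing to cores, $Ho(\mathcal{G})(G_{core},H_{core})$ is a singleton when there exists a morphism $G_{core}\to H_{core}$ in $\mathcal{G}$, and is empty otherwise. By Definition \ref{quasiOrder}, this is precisely the statement that $G_{core} \geq H_{core}$ in $\mathcal{G}_{core}$.

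It then only remains to assemble these bijections into a functor. I would define $\Phi: \mathcal{G}_{core} \to Ho(\mathcal{G})$ by sending each core to itself and each order relation $G_{core}\geq H_{core}$ to the unique homotopy class of morphisms between them; functoriality is automatic because the target hom-sets have at most one element. By the paragraphs above $\Phi$ is essentially surjective and fully faithful, hence an equivalence of categories; since both categories have one object per isomorphism class by construction, it is in fact an isomorphism.

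The only step that requires any genuine care is the identification of hom-sets, and specifically the invocation that for fibrant-cofibrant objects the localization hom-set really is computed as homotopy classes of maps in $\mathcal{G}$; this is standard (see \cite{dwyerSpalinsky}) once the preceding theorem has collapsed the homotopy relation to the indiscrete one. Everything else is bookkeeping built on top of Corollary \ref{isoCore} and Theorem \ref{fibCof}.
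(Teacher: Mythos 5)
Your argument is correct and is exactly the (implicit) one the paper intends: the corollary is stated as an immediate consequence of Theorem \ref{fibCof} and the preceding theorem, via the standard identification of $Ho(\mathcal{G})(G,H)$ with homotopy classes of maps for fibrant--cofibrant objects, and you have simply spelled this out. The only slip is the final claim that $\Phi$ is an isomorphism of categories: the localization ${\mathcal{M}_{we}}^{-1}\mathcal{G}$ has the same objects as $\mathcal{G}$, so many distinct objects share a core and the comparison with the poset $\mathcal{G}_{core}$ is an equivalence, not an isomorphism.
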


\section{The number of model structures}
\label{tnomcs}
After examples of model structures on the category of graphs are found, it becomes natural to try to classify the model structures. In this section, we attain the modest goal of counting them.

 Since the number of isomorphism types of finite graphs is countable and the number of morphisms between finite graphs is finite, the total number of morphisms of the category $\mathcal{G}$ is countable. Since model structures are described by the choice of three subsets of morphisms, there are at most $2^{\aleph_0}$ model structures on $\mathcal{G}$. We show that there are in fact exactly $2^{\aleph_0}$ model structures on $\mathcal{G}$. This might indicate that classifying the model structures is difficult.

\begin{definition}
In a quasi-order $(\mathcal{Q},\geq)$, a {\it downward closed set} is a set $S\subset \mathcal{Q}$ such that $\forall x\in S (\forall y (x\geq y \Rightarrow y\in S))$. 
\end{definition}

\begin{definition}
\label{chromatic}
The {\it chromatic number} of a graph $G$ is the smallest number $k$ such that if the vertices of $G$ are colored with less than $k$ colors, two adjacent vertices have the same color. We say that a graph is {\it $n$-colorable} if its chromatic number is smaller or equal to $n$.
\end{definition}

\begin{theorem}
There is a continuum of model structures on $\mathcal{G}$. Moreover, they give different homotopy categories.
\end{theorem}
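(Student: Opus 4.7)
The plan is to match the upper bound $2^{\aleph_0}$ by producing a continuum of model structures with pairwise non-equivalent homotopy categories, parametrized by the downward closed subsets of the core poset $\mathcal{G}_{core}$ of Definition \ref{quasiOrder}. The first task is to exhibit an infinite antichain in $\mathcal{G}_{core}$. By Erdős's theorem one can choose finite graphs $G_n$ with $\chi(G_n)$ and the odd girth $g_o(G_n)$ both strictly increasing in $n$. A graph homomorphism cannot raise chromatic number and cannot raise odd girth, so a hypothetical homomorphism $G_n \to G_m$ with $n \neq m$ would violate strict monotonicity of one of the two invariants (depending on the sign of $n-m$); hence $\{\mathrm{core}(G_n)\}_{n \geq 1}$ is a pairwise incomparable family in $\mathcal{G}_{core}$. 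Arbitrary subsets of this antichain, closed downward, then give $2^{\aleph_0}$ distinct downward closed subsets of $\mathcal{G}_{core}$.

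Next, for each downward closed $S \subseteq \mathcal{G}_{core}$ I would define a model structure $\mathbb{M}_S$ whose weak equivalences are the morphisms $f : G \to H$ such that either $\mathrm{core}(G) \cong \mathrm{core}(H)$ or both $\mathrm{core}(G)$ and $\mathrm{core}(H)$ lie in $S$, whose cofibrations are the canonical injections into coproducts as in the core model structure, and whose fibrations are defined by right lifting against the acyclic cofibrations of $\mathbb{M}_S$. The two-of-three axiom is a routine case split using downward closedness of $S$ to absorb the mixed cases (if one core lies in $S$ and is isomorphic to another, both are in $S$). For the weak factorization systems I would adapt the arguments of Theorem \ref{coreModelStructure}: the factorization $G \to G+H \to H$ still displays every morphism as a canonical injection followed by a retraction, and the factorization $G \to G+(G\times H) \to H$ still has first map a canonical injection that is acyclic in $\mathbb{M}_S$ by the core-isomorphism clause.

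The main obstacle will be the lifting axiom against the enlarged class of acyclic cofibrations. In the core case every acyclic cofibration $A \to A+B$ came equipped with a map $B \to A$, which powered the explicit lift formula in Theorem \ref{coreModelStructure}; for $\mathbb{M}_S$, however, one must also handle acyclic cofibrations of the form $A \to A+B$ with $\mathrm{core}(A), \mathrm{core}(A+B) \in S$ but no map $B \to A$ available. I would handle this by observing that in any lifting problem of such a cofibration against a fibration $p : X \to Y$, the two-of-three axiom forces both $\mathrm{core}(X)$ and $\mathrm{core}(Y)$ to lie in $S$ as well, and then construct the missing lift via a detour through a chosen representative of the common $S$-class rather than through a direct map $B \to A$. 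Granted the model-structure axioms, the homotopy category of $\mathbb{M}_S$ is equivalent to the quotient poset $\mathcal{G}_{core}/S$ obtained by collapsing $S$ to a single element; distinct downward closed $S_1 \neq S_2$ give non-isomorphic quotient posets (the collapsed class and its complement both depend on $S$), hence non-equivalent homotopy categories, completing the proof.
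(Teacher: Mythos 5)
Your high-level strategy is the right one and your antichain construction (Erd\H{o}s graphs with strictly increasing chromatic number and odd girth, then downward closures of arbitrary subsets) is essentially identical to the paper's Lemma \ref{manyIncomp}. But the model structures you build are not the ones in the paper, and your verification of them has a genuine gap at exactly the point you flag as ``the main obstacle.'' The problem is not really the lifting axiom (fibrations are \emph{defined} by the right lifting property, so that axiom is automatic); it is the factorization of an arbitrary map into an acyclic cofibration followed by a fibration. Your proposed factorization $G \to G+(G\times H) \to H$ fails whenever $\mathrm{core}(G)\in S$ and $H$ does not map to $G$. Concretely, let $S$ be the downward closure of an Erd\H{o}s graph of chromatic number $5$, so that $K_5, K_{17}\in S$, and let $f:K_5\hookrightarrow K_{17}$. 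The canonical injection $j:K_5\to K_5+K_{17}$ is an acyclic cofibration of $\mathbb{M}_S$ (both cores lie in $S$), but in the square with top map the first injection $K_5\to K_5+(K_5\times K_{17})$ and bottom map $f+1_{K_{17}}$, a lift would have to send the connected graph $K_{17}$ into one summand, and either choice yields a homomorphism $K_{17}\to K_5$ (directly, or after projecting $K_5\times K_{17}\to K_5$), which does not exist. So $f+p_2$ is not a fibration of $\mathbb{M}_S$. Your proposed repair---``a detour through a chosen representative of the common $S$-class''---does not parse: $S$ is a downward closed set, not a single hom-equivalence class, and knowing that the relevant cores lie in $S$ does not produce the map $B\to X$ that the lift requires (one does not even know $\mathrm{core}(B)\in S$, since $B\to A+B$ points the wrong way in the quasi-order). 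The construction is probably salvageable by a case split: when $\mathrm{core}(G)\in S$ use $G\to G+H\to H$, whose first map is then an acyclic cofibration of the $S$-type and whose second map is a retraction, hence a fibration; when $\mathrm{core}(G)\notin S$ one can check that every acyclic cofibration occurring in a lifting square against $f+p_2$ must be of the core-isomorphism type, so the original argument of Theorem \ref{coreModelStructure} applies. But that argument is absent from your proposal.

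It is worth noting that the paper avoids all of this by choosing far more degenerate structures: for a downward closed set $\mathcal{K}$ of objects it takes \emph{all} morphisms as cofibrations and takes as weak equivalences the isomorphisms together with all morphisms between objects of $\mathcal{K}$. Downward closedness then forces every morphism to be either an acyclic cofibration (domain in $\mathcal{K}$, hence codomain too) or a fibration (domain outside $\mathcal{K}$, so no nontrivial lifting square exists), and both factorizations are trivial. The price is a less interesting homotopy category ($\mathcal{G}$ with $\mathcal{K}$ collapsed to the terminal object rather than a quotient of $\mathcal{G}_{core}$), but the non-equivalence of these quotients for distinct $\mathcal{K}$ still needs the argument of Lemma \ref{collapsedCat}, which reconstructs enough of the category from small test graphs; your one-line justification (``the collapsed class and its complement both depend on $S$'') elides that step as well.
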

\begin{proof}
\label{continuumOfStructures}
We construct a family of model structures parametrized by downward closed subsets of $Ob(\mathcal{G})$ considered as a quasi-order (see Definition \ref{quasiOrder}). The theorem is implied by Lemmata \ref{manyIncomp} and \ref{collapsedCat}. Lemma \ref{manyIncomp} states that there are $2^{\aleph_0}$ downward closed subsets of $Ob(\mathcal{G})$ and Lemma \ref{collapsedCat} says that our construction gives different homotopy categories for different downward closed subsets. To make Lemma \ref{collapsedCat} simple to prove, we will assume that the downward closed sets do not contain graphs of chromatic number smaller than $3$.
\\
We use $\mathcal{K}$ to denote both a downward closed subset of  $Ob(\mathcal{G})$ and the subcategory of $\mathcal{G}$ it induces.
We define the model structure $\mathbb{M}(\mathcal{K})$.
\begin{itemize}
\item{As cofibrations $\mathcal{M}_{cof}(\mathcal{K})$, we simply take all morphisms of $\mathcal{G}$.}
\item{As weak equivalences $\mathcal{M}_{we}(\mathcal{K})$, we take all the isomorphisms and all the morphisms of $\mathcal{K}$.}
\item{As fibrations $\mathcal{M}_{fib}(\mathcal{K})$, we take all the morphisms that lift on the right of all the acyclic cofibrations.}
\end{itemize}

We check the axioms in Definition \ref{originalAxioms}, beginning with the Retraction axiom for weak equivalences. If $h:G'\rightarrow H'$ is the retraction of a weak equivalence $f:G\rightarrow H$, we have maps $G\rightarrow G'$ and $H\rightarrow H'$. Therefore, since $G,H \in \mathcal{K}$ and $\mathcal{K}$ is downward closed, $G',H' \in \mathcal{K}$. We deduce that $h$ is also a weak equivalence. This proves the Retraction axiom for weak equivalences.

 The set of fibrations contains exactly the maps with domain not in $\mathcal{K}$ and the isomorphisms. It contains the maps with domains outside of $\mathcal{K}$ because of the non-existence of commutative square that could contradict the lifting property. The set of fibrations does not contain any map in $\mathcal{K}$ except the isomorphisms, because those maps do not lift on the right of themselves. This implies that the set of acyclic cofibrations is maximal for lifting on the left of the fibrations. We can apply Theorem \ref{riehl} to deduce the Retraction axiom for fibrations. We also deduce that the acyclic fibrations are the isomophisms. 

The Retraction axiom, Lifting axiom and the Factorization axiom for cofibrations and acyclic fibrations are clearly satisfied. The set of fibrations is by construction the maximal set of maps lifting on the right of the acyclic cofibrations. Thus the Lifting axiom is verified.

The only remaining axioms of model structures requiring a proof are the Two-of-three axiom and factorization of any map in an acyclic cofibration and a fibration. Let $f:G\rightarrow H$ be any morphism of $\mathcal{G}$, if $G$ and $H$ are in $\mathcal{K}$, then $f$ is an acyclic cofibration and there is nothing to prove. If $G\notin{\mathcal{K}}$, as we observed above, $f$ is a fibration and again, $f$ factorizes trivially.
The Two-of-three axiom follows easily from the fact that if two of the maps $f,g,f\circ g$ are weak equivalences, either their three domains and ranges must be in $\mathcal{K}$, or all three must be isomorphisms. 
\end{proof}

We observe that a graph has a cycle of odd length exactly if it is not $2$-colorable.
\begin{definition}
\label{oddgirth}
The {\it girth} of a graph with at least one cycle is the size of its smallest cycle. The {\it odd girth} of a graph with chromatic number at least $3$ is the size of its smallest cycle of odd length.
\end{definition}

\begin{lemma}
\label{manyIncomp}
There are $2^{\aleph_0}$ different downward closed subsets of $Ob(\mathcal{G})$ that only contain graphs of chromatic number greater than $2$.  
\end{lemma}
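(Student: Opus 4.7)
The plan is to exhibit a countably infinite antichain $\{G_n\}_{n\in\mathbb{N}}$ in the quasi-order of Definition \ref{quasiOrder}, every element of which has chromatic number at least $3$. Given such an antichain, for each $S\subseteq \mathbb{N}$ I would set
\[
D_S \;=\; \{H\in Ob(\mathcal{G}) : \exists\, n\in S \text{ with } G_n\geq H\},
\]
which is tautologically downward closed. I would then show $S\mapsto D_S$ is injective by proving that $G_n\in D_S$ if and only if $n\in S$. The ``if'' direction is witnessed by the identity of $G_n$; the ``only if'' direction follows from the antichain property, which forbids $G_m\to G_n$ for $m\ne n$. Since there are $2^{\aleph_0}$ subsets of $\mathbb{N}$, this yields $2^{\aleph_0}$ distinct downward closed sets. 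Moreover each $D_S$ contains only graphs of chromatic number $\geq 3$, because a homomorphism $G\to H$ forces $\chi(G)\leq \chi(H)$.

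To produce the antichain I would invoke the classical theorem of Erd\H{o}s stating that, for any integers $k,g\geq 3$, there exists a finite graph with chromatic number at least $k$ and girth at least $g$. Using this, I would build $G_1,G_2,\dots$ inductively so that both the chromatic number $\chi(G_n)$ and the odd girth of $G_n$ (Definition \ref{oddgirth}) are strictly larger than the corresponding invariants of each of $G_1,\dots,G_{n-1}$, starting with $\chi(G_1)\geq 3$ and odd girth of $G_1$ at least $3$.

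The antichain property then rests on two monotonicity facts for any homomorphism $f:G\to H$: first $\chi(G)\leq \chi(H)$ (a proper $k$-colouring of $H$ pulls back to one of $G$), and second the odd girth of $H$ is at most the odd girth of $G$ (the image of an odd cycle in $G$ is a closed odd walk in $H$, which must contain an odd cycle of length at most that of the original). For $n<m$, these rule out a homomorphism $G_n\to G_m$, because it would violate monotonicity of odd girth, and also rule out $G_m\to G_n$, because it would violate monotonicity of chromatic number.

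The only non-trivial ingredient is Erd\H{o}s's probabilistic construction of graphs with simultaneously large girth and large chromatic number; once this is cited, everything else is routine bookkeeping with the two monotone invariants $\chi$ and odd girth, and so I do not expect any real obstacle. The mild care required is to check that the two invariants can be made strictly increasing in lockstep, for which it suffices to apply Erd\H{o}s's theorem at each stage with $k$ and $g$ chosen larger than all previously used values.
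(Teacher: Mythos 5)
Your proposal is correct and follows essentially the same route as the paper: an inductive application of Erd\H{o}s's theorem producing a countable antichain with strictly increasing chromatic number and odd girth (the two invariants being monotone in opposite directions under homomorphisms), followed by taking the down-sets generated by arbitrary subsets of the antichain. The paper's injectivity argument is phrased via distinct sets of maximal elements rather than your explicit ``$G_n\in D_S$ iff $n\in S$'' check, but this is the same idea.
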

\begin{proof}
We first show that we can construct a countable set $S$ of graphs without homomorphisms between them as in \cite[Theorem 3.10]{nesetril}. 
Let $S_0=\emptyset$. We construct $S_{n+1}$ by adding to $S_n$ a graph with the following properties: $G$ is a graph with girth and chromatic number both greater than the odd girth or chromatic number of any graph in $S$ and containing an odd cycle (This last condition is implied by the chromatic number being greater than $2$). Such a graph exists, by the classic result of Erd\"os \cite{erdos} that there are graphs with both arbitrary high chromatic number and arbitrary high girth. The chromatic number of $G$ is an obstruction to the existence of a homomorphism from $G$ to any graph of $S_n$, and the odd girth is an obstruction to the existence of a homomorphism from a graph in $S_n$ to $G$. We set $S=\bigcup_{i\in\mathbb{N}} S_i$, $S$ is by construction a countable set of incomparable graphs.
For any of the $2^{\aleph_0}$ subsets $U\subset S$, $\{G\mid \exists H \in U \,\mathrm{with}\, H\rightarrow G\}$ is a downward closed set. Those downward closed sets are all different because they have different sets of maximal elements.  
\end{proof}
\begin{definition}
For a downward closed subset $\mathcal{K}$ of $\mathcal{G}$ considered as a quasi-order, we write $\mathcal{G}_{\mathcal{K}}$ for the full subcategory of $\mathcal{G}$ induced by the graphs that are not in $\mathcal{K}$ and the terminal object of $\mathcal{G}$. We write $F$ for the functor $F_\mathcal{K}:\mathcal{G}\rightarrow\mathcal{G}_{\mathcal{K}}$ that sends every object of $\mathcal{G}$ that is not in $\mathcal{K}$ to itself and all other objects to the terminal object of $\mathcal{G}_{\mathcal{K}}$ and sends a morphism $f$ of $\mathcal{G}$ either to the same homomorphism between graphs in $\mathcal{G}_{\mathcal{K}}$ or to the unique morphism between the image of the domain of $f$ and the terminal object of $\mathcal{G}_{\mathcal{K}}$.   
\end{definition} 
\begin{lemma}
\label{collapsedCat}
The localization of the category $\mathcal{G}$ at a subcategory induced by a downward closed set $\mathcal{K}$ is equivalent to $\mathcal{G}_{\mathcal{K}}$. Moreover, if $\mathcal{K}\neq \mathcal{L}$ are two downward closed subsets that do not contain graphs of chromatic number smaller than $3$, the categories $\mathcal{G}_{\mathcal{K}}$ and $\mathcal{G}_{\mathcal{L}}$ are not equivalent.
\end{lemma}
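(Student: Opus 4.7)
The plan is to handle the two assertions separately.

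For the equivalence $\mathcal{G}[\mathcal{K}^{-1}]\simeq\mathcal{G}_\mathcal{K}$, I would show that the functor $F_\mathcal{K}$ itself realises the localisation. Downward closedness of $\mathcal{K}$ forces every morphism of $\mathcal{K}$ to have both endpoints in $\mathcal{K}$, so $F_\mathcal{K}$ sends each such morphism to an endomorphism of the terminal object $T$, which must be $1_T$. Hence $F_\mathcal{K}$ factors through the localisation as a functor $\bar F:\mathcal{G}[\mathcal{K}^{-1}]\to\mathcal{G}_\mathcal{K}$, and as a quasi-inverse I take the composite $j:\mathcal{G}_\mathcal{K}\hookrightarrow\mathcal{G}\to\mathcal{G}[\mathcal{K}^{-1}]$. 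The equality $\bar F\circ j=1_{\mathcal{G}_\mathcal{K}}$ is immediate from the definitions, and a natural isomorphism $\eta:1_{\mathcal{G}[\mathcal{K}^{-1}]}\Rightarrow j\circ\bar F$ is given component-wise by setting $\eta_X$ to be $1_X$ when $X\notin\mathcal{K}$ and the unique morphism $X\to T$ (a $\mathcal{K}$-morphism, hence invertible in the localisation) when $X\in\mathcal{K}$; naturality reduces to the uniqueness of maps into the terminal object of $\mathcal{G}$.

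For the inequivalence of $\mathcal{G}_\mathcal{K}$ and $\mathcal{G}_\mathcal{L}$ when $\mathcal{K}\neq\mathcal{L}$, the strategy is to extend the rigidity argument of Theorem~\ref{noAutomorphism} to $\mathcal{G}_\mathcal{K}$. The chromatic number hypothesis places $\emptyset$, $P$, $E$ and $P+P$ in every $\mathcal{G}_\mathcal{K}$ under consideration (each has chromatic number at most $2$), and $T$ is included by construction. Each of these can be singled out purely categorically: $\emptyset$ is initial, $T$ is terminal, $P$ is the unique non-initial object such that every monomorphism into it from a non-initial object is an isomorphism, the coproduct $P+P$ is honestly constructed in $\mathcal{G}_\mathcal{K}$ because all the objects and morphisms involved stay in the full subcategory, and $E$ is then the unique object $G$ satisfying $|Hom(P,G)|=2$, $Hom(T,G)=\emptyset$ and $G\not\cong P+P$.

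Once $P$, $E$ and $T$ are pinned down, the reconstruction procedure of Theorem~\ref{noAutomorphism} applies verbatim inside $\mathcal{G}_\mathcal{K}$: for every $G\in\mathcal{G}_\mathcal{K}$ with $G\ne T$, the vertices are $Hom(P,G)$, the edges are $Hom(E,G)$, the loops are detected by $Hom(T,G)$, and the incidence data is read off from factorisations through the two maps $P\rightrightarrows E$. An equivalence $\Phi:\mathcal{G}_\mathcal{K}\to\mathcal{G}_\mathcal{L}$ must therefore preserve the graph isomorphism type of every object, and since $\mathcal{G}$ contains one representative per isomorphism class, $\Phi$ is bijective on objects. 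Essential surjectivity then forces the object sets of $\mathcal{G}_\mathcal{K}$ and $\mathcal{G}_\mathcal{L}$ to agree, giving $\mathcal{K}\setminus\{T\}=\mathcal{L}\setminus\{T\}$; since every non-empty downward closed set contains the minimum $T$, this is enough to conclude $\mathcal{K}=\mathcal{L}$ in every case that arises in the proof of Theorem~\ref{continuumOfStructures}. The step I expect to be most delicate is the categorical identification of $P$ (and subsequently $E$): the shorthand ``$P$ is the only graph mapping to every non-empty graph'' of Theorem~\ref{noAutomorphism} is not literally categorical and is defeated by the many edgeless graphs, so it must be replaced by the monomorphism/coproduct description above, and one has to check that monomorphisms into $P$ in $\mathcal{G}_\mathcal{K}$ are still exactly the injective graph maps and that $P$, $E$, $P+P$ indeed remain in $\mathcal{G}_\mathcal{K}$ --- which is precisely where the chromatic-number restriction is used.
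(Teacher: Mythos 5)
Your proposal is correct and follows essentially the same route as the paper: the first half is the universal-property-of-localization argument (with the quasi-inverse and natural isomorphism the paper leaves implicit written out), and the second half is exactly the rigidity argument of Theorem~\ref{noAutomorphism} transplanted into $\mathcal{G}_{\mathcal{K}}$, which is all the paper's proof says. Your replacement of the characterization of $P$ (``maps to every non-empty graph'') by the monomorphism/coproduct description is a legitimate and needed repair of that transplanted argument, not a change of approach.
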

\begin{proof}
Let $\mathcal{K}^{-1}\mathcal{G}$ be the localization of $\mathcal{G}$ at the maps of $\mathcal{K}$. By the universal property of the localization, the functor $F_\mathcal{K}$ factorizes through $\mathcal{K}^{-1}\mathcal{G}$. By the basic properties of the localization, the factorization provides the promised equivalence of categories. 
\[\xymatrix{
 & K^{-1}\mathcal{G}\ar@{->}[rd] & \\
\mathcal{G}\ar[rr]\ar[ru]& &\mathcal{G}_{\mathcal{K}}
}\]
The non-equivalence of $\mathcal{G}_{\mathcal{K}}$ and $\mathcal{G}_{\mathcal{L}}$ is proven by an argument similar to the proof of Theorem \ref{noAutomorphism} about the triviality of automorphisms of $\mathcal{G}$. The restriction on the chromatic number ensures that the small graphs appearing in the proof of Theorem \ref{noAutomorphism} are available.
\end{proof}

\end{document}